\documentclass[12pt]{amsart}
\newtheorem{theorem}{Theorem}[section]

\newtheorem{corollary}[theorem]{Corollary}
\newtheorem{proposition}[theorem]{Proposition}
\theoremstyle{definition}
\newtheorem{definition}[theorem]{Definition}
\newtheorem{example}[theorem]{Example}

\usepackage{enumerate}
\newtheorem{remark}[theorem]{Remark}
\usepackage[margin=0.75in]{geometry}
\usepackage{calligra}
\usepackage{caption}
\usepackage{tikz}
\usetikzlibrary{graphs,graphs.standard}
\usepackage{xcolor, soul}
\sethlcolor{yellow}
\usepackage{mathtools}

\DeclareMathOperator{\sinc}{sinc}

\DeclareMathOperator\Span{span}

\numberwithin{equation}{section}

\def\R{{\mathbb{C}^{N}}}
\def\d{{\delta}}

\def\I{{\{1,...,N\}}}
\def\c{{\chi}}
\def\V{{\{T_{i}g : 1\leq i \leq m\}}}

\newcommand\numberthis{\addtocounter{equation}{1}\tag{\theequation}}

\newcommand{\be}{\begin{equation}}
	\newcommand{\ee}{\end{equation}}

\begin{document}
	\title{Frames and bases of translates of signals on undirected graphs}
	
	
	\author{Rabeetha Velsamy}
	\address{Department of Mathematics, Indian Institute of Technology, Madras, India}
	\email{rabeethavelsamy@gmail.com}
	
	\author{Radha Ramakrishnan}
	\address{Department of Mathematics, Indian Institute of Technology, Madras, India}
	\email{radharam@iitm.ac.in}

	\subjclass{Primary 05C50; Secondary 42C15}
	
	
	
	\keywords{Frames, generalized modulation, generalized translation, graphs, orthonormal set}
	\begin{abstract}
		We study a shift invariant space on an undirected graphs $G$ having $N$ vertices. We obtain a characterization theorem for a system of generalized translates $\{T_{i}g : 1\leq i\leq N\}$, for $g\in \mathbb{C}^{N}$, to form an orthonormal basis. Moreover, we find a necessary and sufficient condition for the system $\{T_{i}g : 1\leq i\leq m\}$, $m\leq N$, to form a linearly independent set and an orthonormal set. Further, we obtain a charactreization result for a system of generalized translates which is generated by multiple generators $g_{1},...,g_{M}$ to form a frame for $\mathbb{C}^{N}$. In particular, we deduce similar results for the system $\{T_{i}M_{s}g : 1\leq i,s\leq N\}$ with modulation $M_{s}$ and the spectral graph wavelet system. We also provide an illustration for the spectral graph wavelet system.
	\end{abstract}
	\maketitle
	\section{Introduction}
	The explosive growth of data (big data) which often exhibits unstructured patterns with different data formats, arising out of diverse data sources, in the current technology requires strong theoretical aspects apart from classical signal processing. The current day applications involving wide variety of network based data such as economic and social network, sensor network, traffic flows and so on naturally lead to graph structures. Hence in order to represent and analyse the relationship between the nodes of large network, it becomes essential to study signal processing on graphs. We refer to \cite{DSP_moura} to know the fundamental concepts of graph signal processing (GSP). The mathematical foundation of GSP is based on applied and computational harmonic analysis and spectral graph theory. We refer to \cite{chen,chengcheg,intro} in this direction.
	
	In GSP, graph Fourier transform is a fundamental tool which helps in decomposing a graph signal into various frequency components and representing them by different modes of variation. The classical Fourier transform can be viewed as an expansion of a function in terms of the eigenfunctions of the Laplacian on the Real line. In analogous way, the graph Fourier transform of a graph signal on the vertices of a graph is the expansion of the graph signal in terms of the eigenvectors of the graph Laplacian or more generally eigenvectors of a matrix (such as adjacency matrix, graph Laplacian and so on) associated with a graph whose eigenvectors form an orthonormal basis for $\mathbb{R}^{N}$ (or $\mathbb{C}^{N}$). For a study of graph Fourier transform on undirected graphs, we refer to \cite{SGt,Fastgraph,l1norm} and for directed graphs \cite{qsun}. In classical signal processing, the convolution theorem plays a central role, namely convolution in the time domain of a signal is equivalent to multiplication in the frequency (Fourier) domain. Using this notion, one can define the notion of convolution for graph signals by making use of the graph Fourier transform, which in turn leads to a generalized translation operator, (see \cite{MR3440174}).
	
	 In classical Fourier analysis, using the system of integer translations, one can define an important space, called shift invariant space in the class of square integrable functions $L^{2}(\mathbb{R})$ on $\mathbb{R}$. The principal shift invariant space is defined to be the closure of span of integer translates of $\phi\in L^{2}(\mathbb{R})$. This space dates back to the work of classical Shannon sampling theorem which makes use of bandlimited functions in $L^{2}(\mathbb{R})$ with band in $[-\frac{1}{2},\frac{1}{2}]$. This space coincides with the shift invariant space with $\phi=\sinc$, where $\sinc(x)=\frac{sin(\pi x)}{\pi x}$. These shift invariant spaces with various functions $\phi$ play an important role in the construction of wavelets. We refer to the classical works of Meyer \cite{meyer} and Mallat \cite{mallat} in this direction. One of the interesting and important question in Fourier analysis is to find necessary and sufficient conditions for the system of translates to form a frame or an orthonormal basis or a Riesz basis. Frames in a Hilbert space are generalization of orthonormal basis and Riesz basis. Frames were introduced by Duffin and Schaeffer in \cite{duffin} in
	order to study non-harmonic Fourier series. Frames are very useful in sampling and quantization theory, communication theory and so on. In the case of GSP, two important classes of frames, namely Gabor frames and wavelet frames have been investigated. We refer to the works \cite{ghandehari} and \cite{hammond} and the references therein.
	
	In this paper, we consider a diagonalizable matrix $A$ which is associated with a graph G having $N$ vertices. We define a shift invariant space on graphs and prove that $\text{span}\{T_{i}g : 1\leq i\leq N\}$ is a shift invariant space, where $g\in \mathbb{C}^{N}$. We also consider a system of generalized translates which is generated by a single genertaor $g\in \mathbb{C}^{N}$ and obtain its characterization to form an orthonormal basis. Moreover, we obtain a necessary and sufficient condition for the system $\{T_{i}g : 1\leq i\leq m\}$, $m\leq N$, to be a linearly independent set and an orthonormal set. Finally, we study a shift invariant space which is generated by multiple generators $g_{1},...,g_{M}$ and obtain a characterization theorem for a system of generalized translates with multiple generators to form a frame for $\mathbb{C}^{N}$. In particular, we deduce similar results for the system $\{T_{i}M_{s}g : 1\leq i,s\leq N\}$ with modulation $M_{s}$ and the spectral graph wavelet system. We also provide an illustration for the spectral graph wavelet system. 
	
	\section{Notation and Background}
	Let $0\neq \mathcal{H}$ be a separable Hilbert space.
	\begin{definition}
		A family of elements $\{f_{k} : k\in \mathbb{N}\}$ in $\mathcal{H}$ is called a frame for $\mathcal{H}$ if there exist two constants $A,B>0$ such that
		\begin{align*}
			A\|f\|^{2}\leq \sum_{k\in \mathbb{N}}|\langle f,f_{k}\rangle|^{2} \leq B\|f\|^{2},\hspace{10mm}\forall\ f\in \mathcal{H}.\numberthis \label{frm}
		\end{align*}
	\end{definition}
	If \eqref{frm} holds with $A=B=1$, then $\{f_{k} : k\in \mathbb{N}\}$ is called a Parseval frame. The frame operator $S:\mathcal{H}\rightarrow \mathcal{H}$ associated with a frame $\{f_{k} : k\in \mathbb{N}\}$ is defined by
	\begin{align*}
		Sf:=\sum\limits_{k\in \mathbb{N}}\langle f,f_{k}\rangle f_{k},\hspace{10mm}\forall\ f\in \mathcal{H}.
	\end{align*}
	It can be shown that $S$ is an invertible, self-adjoint and positive operator on $\mathcal{H}$. In addition, $\{S^{-1}f_{k} : k\in \mathbb{N}\}$ is also a frame with frame operator $S^{-1}$ and frame bounds $B^{-1}, A^{-1}$.
	\begin{definition}
		Let $\{f_{k} : k\in \mathbb{N}\}$ be a frame for $\mathcal{H}$ and $S$ be the corresponding frame operator. Then the collection $\{S^{-1}f_{k} : k\in \mathbb{N}\}$ is called the canonical dual frame of $\{f_{k} : k\in \mathbb{N}\}$.
	\end{definition}
	\begin{definition}
		Let $\{f_{k} : k\in \mathbb{N}\}$ and $\{g_{k} : k\in \mathbb{N}\}$ be two frames for $\mathcal{H}$. Then $\{g_{k} : k\in \mathbb{N}\}$ is said to be a dual frame of $\{f_{k} : k\in \mathbb{N}\}$ if
		\begin{align*}
			f=\sum_{k\in \mathbb{N}}\langle f, g_k\rangle f_k,\ \ \ \ \ \ \ \forall\ f\in \mathcal{H}.
		\end{align*}
	\end{definition}
	\begin{theorem}\label{dualthm}
		Let $\{f_{k} : k\in \mathbb{N}\}$ and $\{g_{k} : k\in \mathbb{N}\}$ be two subsets of $\mathcal{H}$. Then the following are equivalent:
		\begin{enumerate}[(i)]
			\item $f=\sum\limits_{k\in\mathbb{N}}\langle f,g_k\rangle f_k,\hspace{3 mm} \forall\ f\in \mathcal{H}$.
			\vspace{1 mm}
			\item $f=\sum\limits_{k\in\mathbb{N}}\langle f,f_k\rangle g_k,\hspace{3 mm} \forall\ f\in \mathcal{H}$.
			\vspace{1 mm}
			\item $\langle f,g\rangle =\sum\limits_{k\in\mathbb{N}}\langle f,f_k\rangle\langle g_k,g\rangle,\hspace{3 mm}\forall\ f,g\in \mathcal{H}.$
		\end{enumerate}
		When one of the above equivalent conditions is satisfied, $\{f_k:k\in \mathbb{N}\}$ and $\{g_k:k\in \mathbb{N}\}$ are dual frames for $\mathcal{H}$. If $B$ denotes an upper frame bound for $\{f_k:~k\in \mathbb{N}\}$, then $B^{-1}$ is a lower frame bound for $\{g_k:~k\in \mathbb{N}\}$.
	\end{theorem}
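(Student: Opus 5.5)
The plan is to prove the cycle of equivalences (i)$\Rightarrow$(iii)$\Rightarrow$(ii)$\Rightarrow$(iii)$\Rightarrow$(i), and then to derive the frame assertions from (iii). Throughout, convergence of the series is taken in the sense given in (i) or (ii): unconditional in norm, as in the convention used for the frame expansions above. Since inner products are continuous, we may pass them through these series term by term.

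\textbf{Step 1: (i)$\Rightarrow$(iii).} Assume (i) and fix $f,g\in\mathcal{H}$. Apply (i) to $f$ with the roles arranged so that the coefficients are $\langle f,f_k\rangle$. Concretely, (i) applied to $g$ reads $g=\sum_k\langle g,g_k\rangle f_k$. Taking the inner product with $f$ gives
$$\langle f,g\rangle=\sum_k\overline{\langle g,g_k\rangle}\,\langle f,f_k\rangle=\sum_k\langle f,f_k\rangle\langle g_k,g\rangle,$$
which is exactly (iii).

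\textbf{Step 2: (ii)$\Leftrightarrow$(iii).} If (ii) holds, take the inner product of $f=\sum_k\langle f,f_k\rangle g_k$ with $g$ to obtain (iii) directly. Conversely, assume (iii). Then for each fixed $f$, the vector $\sum_k\langle f,f_k\rangle g_k$ has inner product with every $g$ equal to $\langle f,g\rangle$, so it equals $f$ by nondegeneracy of the inner product. This is valid provided the series converges.

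\textbf{Step 3: (iii)$\Rightarrow$(i).} Swap the roles of $f$ and $g$ in (iii) and take complex conjugates:
$$\langle f,g\rangle=\overline{\langle g,f\rangle}=\sum_k\overline{\langle g,f_k\rangle}\,\overline{\langle g_k,f\rangle}=\sum_k\langle f,g_k\rangle\langle f_k,g\rangle.$$
The same nondegeneracy argument as in Step 2 then yields (i).

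\textbf{Main obstacle: convergence in Step 2 and Step 3.} Identity (iii) gives only weak convergence, so we must show the series actually converges. The route I would try first uses (iii) with $f=g$, which gives $\|f\|^2=\sum_k\langle f,f_k\rangle\langle g_k,f\rangle$. To turn this into control of $\sum_k|\langle f,f_k\rangle|^2$ we need a Bessel bound, and this is where the hypothesis that both families are frames enters. In the setting where the theorem is used, both are Bessel sequences, so the synthesis operators $\{c_k\}\mapsto\sum_k c_kg_k$ and $\{c_k\}\mapsto\sum_k c_kf_k$ are bounded on $\ell^2$. Since $\{\langle f,f_k\rangle\}$ and $\{\langle f,g_k\rangle\}$ lie in $\ell^2$, the series converge unconditionally. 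I would therefore state the equivalence under the standing assumption that both families are Bessel. In the finite-dimensional applications of this paper ($\mathcal{H}=\mathbb{C}^N$, finitely many vectors) the issue disappears entirely.

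\textbf{Step 4: frame bounds.} Let $B$ be an upper bound for $\{f_k\}$. From (iii) with $g=f$ and Cauchy--Schwarz,
$$\|f\|^2\le\Big(\sum_k|\langle f,f_k\rangle|^2\Big)^{1/2}\Big(\sum_k|\langle f,g_k\rangle|^2\Big)^{1/2}\le B^{1/2}\|f\|\Big(\sum_k|\langle f,g_k\rangle|^2\Big)^{1/2}.$$
Hence $\sum_k|\langle f,g_k\rangle|^2\ge B^{-1}\|f\|^2$, so $B^{-1}$ is a lower frame bound for $\{g_k\}$. Combined with the Bessel property of $\{g_k\}$, this shows $\{g_k\}$ is a frame. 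By symmetry $\{f_k\}$ is a frame as well, and (i) says precisely that they are dual frames.
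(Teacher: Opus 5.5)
The paper gives no proof of this statement. It is quoted in Section 2 as a standard lemma on dual frames (cf.\ \cite{CN}), so there is no in-paper argument to compare against. Your proposal is correct once the Bessel hypothesis is restored, and it is essentially the standard proof written in coordinates.

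If $T$ and $U$ denote the synthesis operators of $\{f_k\}$ and $\{g_k\}$, then:
\begin{enumerate}[(i)]
\item says $TU^{\ast}=I$;
\item says $UT^{\ast}=I$;
\item says $\langle UT^{\ast}f,g\rangle=\langle f,g\rangle$ for all $f,g$.
\end{enumerate}
The textbook argument gets (i)$\Leftrightarrow$(ii) in one line from $(TU^{\ast})^{\ast}=UT^{\ast}$. Your Steps 1--3 unpack the same adjoint relation through term-by-term conjugation. This costs you an explicit convergence discussion, but it uses nothing beyond continuity of the inner product. The first sentence of your Step 1 is muddled, but the concrete computation, applying (i) to $g$ and pairing with $f$, is right. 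Step 4 is the standard Cauchy--Schwarz argument.

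You are right that an extra hypothesis is needed, and you should say so more firmly. In its standard form the lemma assumes that both families are Bessel sequences. The paper's quotation drops this, and without it the final assertions are false. Take an orthonormal basis $\{e_k\}$ of an infinite-dimensional $\mathcal{H}$, and set $f_k=ke_k$, $g_k=k^{-1}e_k$. These families satisfy (i), (ii) and (iii) with unconditionally convergent series. Yet $\{f_k\}$ is not Bessel, and $\{g_k\}$ has no lower frame bound, since $\sum_k|\langle e_n,g_k\rangle|^2=n^{-2}\to 0$.

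One wording point. What your convergence step uses is the Bessel property, i.e.\ upper bounds only, not the fact that both families are frames. Frame-ness is the conclusion, so invoking it there would be circular. Your final formulation, a standing Bessel assumption, is the correct one. As you note, it holds automatically for the finite families in $\mathbb{C}^N$ to which the paper applies the result.
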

	\begin{definition}
		Let $\{f_{k} : k\in \mathbb{N}\}$ and $\{g_{k} : k\in \mathbb{N}\}$ be two sequences in $\mathcal{H}$. They are said to be biorthogonal if they satisfy the relation $$\langle f_{k}, g_{l} \rangle =\delta_{kl}.$$
	\end{definition}
	For further study of frames we refer to \cite{CN}.\\
	
	Now, we shall provide the necessary background on spectral graph theory and graph Fourier transform.\\
	
	Let $G$ be an undirected, connected, weighted graph with a vertex set $\mathcal{V}$ such that $|\mathcal{V}|=N$. Then, its associated weighted adjacency matrix is denoted by $W$. Let $A$ be a diagonalizable matrix associated with the graph G. The Laplacian matrix and the adjacency matrix are some examples of such matrix corresponding to the graph G. 
	Let $\lambda_{1},...,\lambda_{N}$ be eigenvalues of $A$ with the corresponding eigenvectors $\c_{1}, \ldots, \c_{N}$.\\
	
	A signal $f:\mathcal{V}\rightarrow \mathbb{C}$ is a function defined on $\mathcal{V}$, which can be identified as a vector in $\R$ as $(f(1),...,f(N))$, where $f(n)$ denotes the signal value of $f$ at the $n^{th}$ vertex in $\mathcal{V}$. The graph Fourier transform $\widehat{f}$ of a function $f\in \R$ is defined by $$\widehat{f}(\lambda_{l})=\sum\limits_{n=1}^{N}f(n)\overline{\c_{l}(n)},\quad l=1,...,N.$$ Notice that the graph Fourier transform of $f$ is defined on the spectrum of $A$, denoted by $\text{spec}(A)$. The inverse graph Fourier transform is given by $$f(n)=\sum\limits_{l=1}^{N}\widehat{f}(\lambda_{l})\c_{l}(n),\quad n=1,...,N.$$ Since $\{\c_{l}\}_{l=1}^{N}$ is the set of orthonormal eigenvectors, we have the following Parseval identity $$\langle f, g \rangle=\langle \widehat{f}, \widehat{g} \rangle\quad f,g\in \R.$$  
	\begin{definition}
		Let $f,g\in \mathbb{C}^{N}.$ Then, the generalized convolution $f\ast g$ is defined by $$f\ast g(n)=\sum\limits_{l=1}^{N}\widehat{f}(\lambda_{l})\widehat{g}(\lambda_{l})\c_{l}(n),\quad n=1,...,N.$$
	\end{definition}
	\begin{proposition}\label{convpro}
		Let $f,g,h\in \R$. Then, the generalized convolution satifies the following properties:
		\begin{enumerate}
			\item $\widehat{f\ast g}=\widehat{f}~\widehat{g}.$
			\item $\alpha(f\ast g)=(\alpha f)\ast g=f\ast (\alpha g),\quad \text{for all}~\alpha\in \mathbb{C}.$
			\item $f\ast g=g\ast f.$
			\item $f\ast (g+h)=f\ast g+ f\ast h.$
		\end{enumerate}
	\end{proposition}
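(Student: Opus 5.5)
The plan is to work entirely on the Fourier side: apply the graph Fourier transform to the defining formula of the generalized convolution and use that $\{\c_{l}\}_{l=1}^{N}$ is an orthonormal basis of $\R$. Two facts make this possible. First, the transform $f\mapsto \widehat{f}$ is a linear bijection of $\R$ onto itself, since the inverse graph Fourier transform recovers $f$. Second, $f\ast g$ is by definition the vector whose expansion coefficients in the basis $\{\c_{l}\}$ are $\widehat{f}(\lambda_{l})\widehat{g}(\lambda_{l})$.

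For property (1), I would compute $\widehat{f\ast g}(\lambda_{k})=\sum_{n=1}^{N} f\ast g(n)\overline{\c_{k}(n)}$. Substituting the definition of $f\ast g(n)$ and interchanging the two finite sums gives
$$\widehat{f\ast g}(\lambda_{k})=\sum_{l=1}^{N}\widehat{f}(\lambda_{l})\widehat{g}(\lambda_{l})\langle \c_{l},\c_{k}\rangle .$$
Orthonormality, $\langle \c_{l},\c_{k}\rangle=\d_{lk}$, then yields $\widehat{f}(\lambda_{k})\widehat{g}(\lambda_{k})$. This is the only step that needs any care: one must use orthonormality of the eigenvectors rather than mere diagonalizability of $A$, which the background section implicitly assumes.

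Properties (2), (3) and (4) follow from (1) together with injectivity of the Fourier transform. Alternatively, they can be read off directly from the defining formula, because it depends on $f$ and $g$ only through the product $\widehat{f}(\lambda_{l})\widehat{g}(\lambda_{l})$.

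Linearity of the transform gives $\widehat{\alpha f}=\alpha\widehat{f}$ and $\widehat{g+h}=\widehat{g}+\widehat{h}$. Substituting into the definition gives:
\begin{itemize}
\item (2): $\alpha\,\widehat{f}\,\widehat{g}=(\alpha\widehat{f})\,\widehat{g}=\widehat{f}\,(\alpha\widehat{g})$, so all three expressions have the same coefficients in the basis $\{\c_{l}\}$ and hence coincide.
\item (3): commutativity of multiplication in $\mathbb{C}$ gives $\widehat{f}(\lambda_{l})\widehat{g}(\lambda_{l})=\widehat{g}(\lambda_{l})\widehat{f}(\lambda_{l})$ for every $l$.
\item (4): distributivity gives $\widehat{f}(\widehat{g}+\widehat{h})=\widehat{f}\,\widehat{g}+\widehat{f}\,\widehat{h}$ pointwise on $\text{spec}(A)$. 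Summing against $\c_{l}(n)$ splits the convolution into $f\ast g+f\ast h$.

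\end{itemize}

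There is no genuine obstacle. The main point is simply to record explicitly that the transform is linear and determines $f$ uniquely, which justifies passing from equality of Fourier coefficients to equality of vectors.
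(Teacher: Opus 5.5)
Your proof is correct. The paper states this proposition without proof, as standard background, and your computation is exactly the routine verification being taken for granted: for (1), substitute the definition of $f\ast g$ into the transform, interchange the finite sums and use $\langle \chi_{l},\chi_{k}\rangle=\delta_{lk}$; for (2)--(4), read them directly off the defining formula via linearity of $f\mapsto \widehat{f}$. Your observation that orthonormality of $\{\chi_{l}\}$, rather than mere diagonalizability of $A$, is what is actually used is apt, since the paper tacitly assumes it when it writes the inverse transform and the Parseval identity.
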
 
	The generalized convolution can also be written in the form of matrix as follows: \begin{equation}\label{conmatri}
		f\ast g=\widehat{g}(A)f=\chi \begin{pmatrix}
			\widehat{g}(\lambda_{1}) & \dots & 0\\
			\vdots & \ddots & \vdots\\
			0 & \dots & \widehat{g}(\lambda_{N}) 
		\end{pmatrix}\chi^{\ast}f,
	\end{equation} 
	where $\chi$ is the $N\times N$ matrix whose $j^{th}$ column is $\chi_{j}$, $j=1,...,N$. 
	\begin{definition}
		Let $f\in \R.$ Then, for $i=1,...,N$, the generalized translation $T_{i}f$ of $f$ is defined by $$T_{i}f(n)=\sqrt{N}(f\ast \delta_{i})(n),\quad n=1,...,N,$$
		where $\delta_{i}$ is the Kronecker delta function on $G$. It is defined by \begin{equation*}
			\delta_{i}(n)=\begin{cases}
				1 & \text{if}~n=i\\
				0 &\text{otherwise}.
			\end{cases}
		\end{equation*}
	\end{definition}
	
	\begin{remark}\label{delortho}
		We can show that $\{\widehat{\d_{1}},...,\widehat{\d_{N}}\}$ is an orthonormal basis of $\mathbb{C}^{N}.$ In fact, for $i=1,...,N$, 
		\begin{equation}\label{remark}
			\widehat{\d_{i}}(\lambda_{l})=\langle \d_{i}, \c_{l} \rangle=\sum\limits_{n=1}^{N}\d_{i}(n)\overline{\c_{l}(n)}=\overline{\c_{l}(i)}.
		\end{equation}
	\end{remark}
	
	\begin{definition}
		Let $g\in \R$. Then the generalized modulation $M_{i}f$ of $f$ is defined by $$M_{i}f(n)=\chi_{i}(n)f(n),\quad n=1,...,N.$$
	\end{definition}
In classical Fourier analysis, modulations turns out to be the Fourier transform of translations. However, we can not have such a relation in the graph setting.\\

	For further study of graph Fourier transform, generalized translations and generalized modulations, we refer to \cite{MR3440174,ghandehari}.
	\begin{definition}\cite{emerging}
	Let $\widehat{g} : \mathbb{C}\rightarrow \mathbb{C}$ and $\widehat{g_{0}}=\widehat{g}|_{\text{spec}(A)}$. Then one can obtain $g_{0}$ by using the inversion formula for the graph Fourier transform, which we denote by $g$ itself. For such a signal $g$, the dilation is defined as  $$\widehat{D_{s}g}(\lambda_{l})=\widehat{g}(s\lambda_{l}),\quad s\in \mathbb{C},~l=1,...,N.$$
	\end{definition}
\begin{remark}
Notice that unlike the generalized translation and the generalized modulation, the generalized dilation requires the function $\widehat{g}$ to be defined on $\mathbb{C}$ instead of the spectrum of $A.$ We denote the restriction of $\widehat{g}$ to the spectrum of A by $g_{0}$.
\end{remark}
\begin{definition}\cite{hammond}
	Let $\widehat{g} : \mathbb{C}\rightarrow \mathbb{C}$ and $J$ be any finite subset of $\mathbb{C}$. Then the spectral graph wavelet system is defined to be the collection $\{T_{i}D_{s}g : 1\leq i\leq N,~~ s\in J\}.$ 
\end{definition}
	\section{Shift invariant spaces on graphs}
	\begin{definition}
		A subspace $V$ of $\R$ is said to be a shift invariant space if whenever $f\in V$, then $T_{i}f\in V$, for all $i\in \I$.
	\end{definition}
	For $g\in \R$, let $V(g)$ denote the $\Span\{T_{i}g : 1\leq i\leq N\}$. Then $V(g)$ turns out to be a shift invariant space. In fact, given $f\in V(g)$ can be written as $	f=\sum\limits_{i=1}^{N}\alpha_{i}T_{i}g$. Now, for each $1\leq k\leq N,$ using the definition of generalized translation, Kronecker delta function and property $(2)$ in Proposition \ref{convpro}, we get
	\begin{align*}
		T_{k}f&=\sum_{i=1}^{N}\alpha_{i}T_{k}T_{i}g\\
		&=\sqrt{N}\sum_{i=1}^{N}\alpha_{i}(T_{i}g\ast \delta_{k})\\
		&=N\sum_{i=1}^{N}\alpha_{i}(g\ast \delta_{i}\ast \delta_{k})\\
		&=N\sum_{i=1}^{N}\alpha_{i}(g\ast \sum_{m=1}^{N}\langle \delta_{i}\ast \delta_{k}, \delta_{m} \rangle \delta_{m})\\
		&=N \sum_{m=1}^{N}\big(\sum_{i=1}^{N}\alpha_{i}\langle \delta_{i}\ast \delta_{k}, \delta_{m} \rangle \big) g\ast\delta_{m}\\
		&=N \sum_{m=1}^{N}\big(\sum_{i=1}^{N}\alpha_{i}\langle \delta_{i}\ast \delta_{k}, \delta_{m} \rangle \big) T_{m}g,
	\end{align*}	
	which belongs to $V(g)$, proving our assertion.
	\begin{theorem}\label{ort}
		Let $g\in \R$. Then, the collection $\{T_{i}g : 1\leq i\leq N\}$ is an orthonormal basis for $V(g)$ if and only if 
		\begin{equation}\label{orthocondi1}
			|\widehat{g}(\lambda_{l})|=\frac{1}{\sqrt{N}},\quad \text{for all}\quad l\in \{1,...,N\}~(\text{In this case}~V(g)=\mathbb{C}^{N}).
		\end{equation}
	\end{theorem}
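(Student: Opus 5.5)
The plan is to compute the Gram matrix of $\{T_{i}g : 1\leq i\leq N\}$ in the Fourier domain and compare it with the identity. First, by Proposition \ref{convpro}(1) and \eqref{remark}, the Fourier transform of each translate is
\[
\widehat{T_{i}g}(\lambda_{l})=\sqrt{N}\,\widehat{g}(\lambda_{l})\,\widehat{\d_{i}}(\lambda_{l})=\sqrt{N}\,\widehat{g}(\lambda_{l})\,\overline{\c_{l}(i)} .
\]
Then, using the Parseval identity $\langle f,h\rangle=\langle \widehat f,\widehat h\rangle$, we obtain
\[
\langle T_{i}g, T_{j}g\rangle = N\sum_{l=1}^{N}|\widehat{g}(\lambda_{l})|^{2}\,\overline{\c_{l}(i)}\,\c_{l}(j).
\]
In matrix form, the Gram matrix is $N\,\overline{\chi}\,D\,\chi^{T}$, where $D=\mathrm{diag}(|\widehat g(\lambda_1)|^2,\dots,|\widehat g(\lambda_N)|^2)$. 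Since $\chi$ is unitary, $\overline{\chi}$ is unitary as well.

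For sufficiency, assume $|\widehat g(\lambda_l)|^2=1/N$ for every $l$. Then $D=\frac1N I$, so the Gram matrix equals $\overline{\chi}\chi^{T}=\overline{\chi\chi^{*}}=I$. Hence $\langle T_ig,T_jg\rangle=\d_{ij}$, and the system is orthonormal. An orthonormal set of $N$ vectors in $\mathbb{C}^{N}$ is a basis of $\mathbb{C}^{N}$. Therefore $V(g)=\mathbb{C}^{N}$, and the system is an orthonormal basis of $V(g)$; this also gives the parenthetical claim.

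For necessity, assume the system is an orthonormal basis of $V(g)$. In particular, $\{T_{i}g : 1\leq i\leq N\}$ is an orthonormal set, so the Gram matrix is $I$. This gives $N\,\overline{\chi}D\chi^{T}=I$, and conjugating by the unitary $\overline{\chi}$ yields $D=\frac1N I$. Equivalently, the relation $\sum_l |\widehat g(\lambda_l)|^2\overline{\c_l(i)}\c_l(j)=\frac1N\d_{ij}$, together with orthonormality of the rows of the unitary $\chi$, forces $|\widehat g(\lambda_l)|^2=1/N$ for each $l$.

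The main point requiring care is the necessity direction. The hypothesis only concerns $V(g)$, which a priori could be a proper subspace. The argument sidesteps this, because orthonormality of the $N$ vectors alone already determines the Gram matrix. I would also write the matrix identities carefully with conjugates and transposes, so that the unitarity of $\chi$ is applied correctly.
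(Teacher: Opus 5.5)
Your proof is correct and follows essentially the same route as the paper. Both compute $\langle T_ig,T_jg\rangle=N\sum_{l}|\widehat{g}(\lambda_l)|^2\,\widehat{\delta_i}(\lambda_l)\overline{\widehat{\delta_j}(\lambda_l)}$ via Parseval, get sufficiency from the orthonormality of the $\widehat{\delta_i}$, and get necessity by recognizing the Gram matrix as a unitary conjugate of $\mathrm{diag}(|\widehat{g}(\lambda_l)|^2)$. The only difference is packaging: the paper identifies the Gram matrix with $N\widehat{g}(A)^{\ast}\widehat{g}(A)=N\chi D\chi^{\ast}$, which is really its transpose (harmless since the target is $I$), whereas you write it directly as $N\overline{\chi}D\chi^{T}$ and so keep the conjugations straight.
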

	\begin{proof}
		It is enough to show that the system is an orthonormal system, as the dimension of $\mathbb{C}^{N}$ is $N$. Since $T_{i}g=\sqrt{N}(g\ast \delta_{i})$, for $1\leq i\leq N,$ we can write 
		\begin{align*}
			\langle T_{i}g, T_{j}g\rangle&=N\langle g\ast \delta_{i}, g\ast \delta_{j}\rangle\\
			&=N\langle \widehat{g\ast \delta_{i}}, \widehat{g\ast \delta_{j}}\rangle,
		\end{align*}
		by using Parseval identity for the graph Fourier transform. Thus
		\begin{align}\label{orthoij}
			\langle T_{i}g, T_{j}g\rangle 
			&=N\langle \widehat{g}\widehat{\delta_{i}}, \widehat{g}\widehat{\delta_{j}} \rangle\nonumber\\
			&=N\sum\limits_{l=1}^{N}\widehat{g}(\lambda_{l})\widehat{\delta_{i}}(\lambda_{l})\overline{\widehat{g}(\lambda_{l})}\overline{\widehat{\delta_{j}}(\lambda_{l})}\nonumber\\
			&=N\sum\limits_{l=1}^{N}|\widehat{g}(\lambda_{l})|^{2}\widehat{\delta_{i}}(\lambda_{l})\overline{\widehat{\delta_{j}}(\lambda_{l})}.
		\end{align}
		Assume that \eqref{orthocondi1} holds. Then, by using \eqref{orthoij} and Remark \ref{delortho}, it follows that $\{T_{i}g : 1\leq i\leq N\}$ is an orthonormal system. Conversely, assume that $\{T_{i}g \}_{i=1}^{N}$ is an orthonormal system. Then, from \eqref{orthoij}, we have\begin{equation}\label{orthonec}
			N\sum\limits_{l=1}^{N}|\widehat{g}(\lambda_{l})|^{2}\widehat{\delta_{i}}(\lambda_{l})\overline{\widehat{\delta_{j}}(\lambda_{l})}=\begin{cases}
				1 & \text{if}~i= j\\
				0 & \text{otherwise}.
			\end{cases}
		\end{equation} 
		By making use of \eqref{conmatri}, the $(i,j)^{th}$ entry of $\widehat{g}(A)^{\ast}\widehat{g}(A)$ and $\widehat{g}(A)\widehat{g}(A)^{\ast}$ can be computed. It is given by \begin{equation}\label{gmatrix}
			\bigg[\widehat{g}(A)^{\ast} \widehat{g}(A)\bigg]_{i,j}=\bigg[\widehat{g}(A)\widehat{g}(A)^{\ast}\bigg]_{i,j}=\sum\limits_{l=1}^{N}|\widehat{g}(\lambda_{l})|^{2}\chi_{l}(i)\overline{\chi_{l}(j)}.
		\end{equation} By substituting \eqref{remark} and \eqref{orthonec} in \eqref{gmatrix}, we obtain $N\widehat{g}(A)^{\ast} \widehat{g}(A)=N\widehat{g}(A)\widehat{g}(A)^{\ast}=I_{N\times N}$, where $I_{N\times N}$ is the $N\times N$ identity matrix from which \eqref{orthocondi1} follows.
	\end{proof} 
	As a byproduct, we have proved the following result.
	\begin{corollary}
		Let $G$ be an undirected, connected graph with a vertex set $\mathcal{V}$. Let $g : \mathcal{V}\rightarrow \mathbb{C}$ be a signal on $\mathcal{V}$. Then $\{T_{i}g : 1\leq i\leq N\}$ is an orthonormal basis iff the matrix $\sqrt{N}\widehat{g}(A)$ is unitary, where $A$ is the matrix associated with $G$.
	\end{corollary}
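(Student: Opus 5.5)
The plan is to reduce the statement to the Gram matrix of the system $\{T_{i}g\}_{i=1}^{N}$ and compare it with $N\,\widehat{g}(A)^{\ast}\widehat{g}(A)$. First we identify the vectors $T_{i}g$ with the columns of $\sqrt{N}\widehat{g}(A)$. By \eqref{conmatri},
\[
T_{i}g=\sqrt{N}(g\ast\delta_{i})=\sqrt{N}\,\widehat{g}(A)\delta_{i},
\]
so $T_{i}g$ is exactly the $i$-th column of $\sqrt{N}\widehat{g}(A)$. Consequently $\langle T_{j}g,T_{i}g\rangle$ is the $(i,j)$ entry of $N\widehat{g}(A)^{\ast}\widehat{g}(A)$. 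This is consistent with \eqref{orthoij} and \eqref{gmatrix}, since by \eqref{remark} we have $\widehat{\delta_{i}}(\lambda_{l})\overline{\widehat{\delta_{j}}(\lambda_{l})}=\overline{\chi_{l}(i)}\chi_{l}(j)$; up to the conjugation convention, this is the same sum.

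The two directions then follow from this identification. If $\{T_{i}g\}$ is an orthonormal basis, the Gram matrix is $I_{N\times N}$, so $(\sqrt{N}\widehat{g}(A))^{\ast}(\sqrt{N}\widehat{g}(A))=I_{N\times N}$. For a square matrix this already gives unitarity. Conversely, if $\sqrt{N}\widehat{g}(A)$ is unitary, its columns $T_{i}g$ form an orthonormal set of $N$ vectors in $\mathbb{C}^{N}$. They are therefore an orthonormal basis of $\mathbb{C}^{N}=V(g)$.

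Alternatively, one can route both directions through Theorem \ref{ort}. The matrix $\widehat{g}(A)=\chi\,\mathrm{diag}(\widehat{g}(\lambda_{l}))\chi^{\ast}$ is normal, with $\chi$ unitary. Hence $\sqrt{N}\widehat{g}(A)$ is unitary if and only if $\sqrt{N}\,\mathrm{diag}(\widehat{g}(\lambda_{l}))$ is unitary, which holds if and only if $|\widehat{g}(\lambda_{l})|=1/\sqrt{N}$ for all $l$. That is exactly condition \eqref{orthocondi1}.

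The only delicate point is the conjugation and index bookkeeping in matching the Gram matrix entries with \eqref{gmatrix}. We handle it by observing that $\widehat{g}(A)^{\ast}\widehat{g}(A)=\widehat{g}(A)\widehat{g}(A)^{\ast}$ by normality, and that being equal to $I_{N\times N}$ is invariant under transposition and conjugation. Either form of the identity therefore suffices.
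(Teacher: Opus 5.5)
Your proof is correct and follows essentially the paper's route. The paper reads the corollary off the proof of Theorem \ref{ort}, where the Gram matrix of $\{T_{i}g\}$ is identified with $N\widehat{g}(A)^{\ast}\widehat{g}(A)$ via \eqref{orthoij} and \eqref{gmatrix}; your observation that $T_{i}g=\sqrt{N}\,\widehat{g}(A)\delta_{i}$ is the $i$-th column of $\sqrt{N}\widehat{g}(A)$ (from \eqref{conmatri} together with commutativity of $\ast$) is a quicker way to get the same identity, and your explicit unitary-implies-orthonormal-basis direction spells out what the paper leaves implicit when it calls the corollary a byproduct.
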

	\begin{theorem}
		Let $g,h\in \R$. Then, the two collections $\{T_{i}g : 1\leq i\leq N\}$ and $\{T_{i}h : 1\leq i\leq N\}$ are biorthogonal if and only if \begin{equation}\label{biorthocondi}
			N\overline{\widehat{g}(\lambda_{l})}\widehat{h}(\lambda_{l})=1,\quad \text{for all}\quad l\in \{1,...,N\}.
		\end{equation}
	\end{theorem}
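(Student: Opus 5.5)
We will mirror the computation in the proof of Theorem \ref{ort}, replacing one copy of $g$ by $h$. Since $T_{i}g=\sqrt{N}(g\ast\delta_{i})$ and $T_{j}h=\sqrt{N}(h\ast\delta_{j})$, the Parseval identity together with property (1) of Proposition \ref{convpro} gives
\begin{equation*}
\langle T_{i}g, T_{j}h\rangle = N\sum_{l=1}^{N}\widehat{g}(\lambda_{l})\overline{\widehat{h}(\lambda_{l})}\,\widehat{\delta_{i}}(\lambda_{l})\overline{\widehat{\delta_{j}}(\lambda_{l})}
= \sum_{l=1}^{N} N\widehat{g}(\lambda_{l})\overline{\widehat{h}(\lambda_{l})}\,\overline{\chi_{l}(i)}\chi_{l}(j),
\end{equation*}
where the second equality uses \eqref{remark}. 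Biorthogonality means this equals $\delta_{ij}$ for all $i,j$.

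For sufficiency, assume \eqref{biorthocondi}. Taking complex conjugates, $N\widehat{g}(\lambda_{l})\overline{\widehat{h}(\lambda_{l})}=1$ for every $l$. The sum then reduces to $\sum_{l}\overline{\chi_{l}(i)}\chi_{l}(j)$. This equals $\delta_{ij}$ because the matrix $\chi$ is unitary, so its rows are orthonormal as well as its columns (equivalently, $\{\widehat{\delta_{i}}\}$ is orthonormal by Remark \ref{delortho}).

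For necessity, we interpret the sum as a matrix entry. Let $D=\mathrm{diag}\big(N\widehat{g}(\lambda_{l})\overline{\widehat{h}(\lambda_{l})}\big)_{l}$. Then $\langle T_{i}g,T_{j}h\rangle=[\overline{\chi}\,D\,\chi^{T}]_{i,j}$, or equivalently, after conjugating, $[\chi \overline{D}\chi^{\ast}]_{i,j}$, in the same way as \eqref{gmatrix} was obtained. Biorthogonality says $\chi\overline{D}\chi^{\ast}=I_{N\times N}$. Since $\chi$ is unitary, conjugating by it gives $\overline{D}=\chi^{\ast}\chi=I_{N\times N}$, which yields $N\overline{\widehat{g}(\lambda_{l})}\widehat{h}(\lambda_{l})=1$ for all $l$.

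The main point requiring care is the bookkeeping of complex conjugates and transposes: deciding whether the identity is $\chi\overline{D}\chi^{\ast}=I$ or $\overline{\chi}D\chi^{T}=I$. Both lead to the same diagonal condition, since $\overline{\chi}$ is also unitary. Beyond that, the argument is a direct analogue of the proof of Theorem \ref{ort}.
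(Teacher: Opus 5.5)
Your proof is correct and takes essentially the same route as the paper: compute $\langle T_{i}g, T_{j}h\rangle$ via Parseval and Remark \ref{delortho}, get sufficiency from the orthonormality of $\{\widehat{\delta_{i}}\}$, and for necessity identify the Gram matrix with $\chi\overline{D}\chi^{\ast}=N\widehat{g}(A)^{\ast}\widehat{h}(A)$ and use that $\chi$ is unitary. Your explicit step $\overline{D}=\chi^{\ast}(\chi\overline{D}\chi^{\ast})\chi=I$ simply spells out what the paper compresses into ``from which \eqref{biorthocondi} follows.''
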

	\begin{proof}
		By using similar steps as in \eqref{orthoij}, we obtain
		\begin{equation}\label{biorthoij}
			\langle T_{i}g, T_{j}h\rangle=N\sum\limits_{l=1}^{N}\widehat{g}(\lambda_{l})\overline{\widehat{h}(\lambda_{l})}\widehat{\delta_{i}}(\lambda_{l})\overline{\widehat{\delta_{j}}(\lambda_{l})}.
		\end{equation}
		Assume that \eqref{biorthocondi} holds. Then, the biorthogonality of the systems $\{T_{i}g : 1\leq i\leq N\}$ and $\{T_{i}h : 1\leq i\leq N\}$ follows from \eqref{biorthoij} and Remark \ref{delortho}. Conversely, assume that $\{T_{i}g : 1\leq i\leq N\}$ and  $\{T_{i}h : 1\leq i\leq N\}$ are biorthogonal. Then, from \eqref{biorthoij}, we have\begin{equation*}
			N\sum\limits_{l=1}^{N}\widehat{g}(\lambda_{l})\overline{\widehat{h}(\lambda_{l})}\widehat{\delta_{i}}(\lambda_{l})\overline{\widehat{\delta_{j}}(\lambda_{l})}=\begin{cases}
				1 & \text{if}~i\neq j\\
				0 & \text{otherwise}.
			\end{cases}
		\end{equation*} 
		In other words, \begin{equation*}
			\bigg[\widehat{g}(A)^{\ast} \widehat{h}(A)\bigg]_{i,j}=\sum\limits_{l=1}^{N}\overline{\widehat{g}(\lambda_{l})}\widehat{h}(\lambda_{l})\chi_{l}(i)\overline{\chi_{l}(j)}.
		\end{equation*}
		This in turn leads to $N\widehat{g}(A)^{\ast} \widehat{h}(A)=I_{N\times N}$, where $I_{N\times N}$ is the $N\times N$ identity matrix, from which \eqref{biorthocondi} follows.
	\end{proof}
\section{Linear independence and orthonormality for a subcollection of a system of generalized translates}
In this section, we obtain necessary and sufficient conditions for a system of generalized translates to form a linearly independent set or an orthonormal set.
\begin{theorem}
	Let $g\in \R$. If $|\widehat{g}(\lambda_{l})|>0$ for all $l\in \I$, then the collection $\V$ is linearly independent, for every $m\leq N$.
\end{theorem}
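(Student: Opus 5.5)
We reduce linear independence of $\V$ to the invertibility of the matrix $\widehat{g}(A)$ from \eqref{conmatri}. It suffices to treat $m=N$, since any subset of a linearly independent set is linearly independent.

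By definition of generalized translation and \eqref{conmatri}, $T_{i}g=\sqrt{N}\,(g\ast\delta_{i})=\sqrt{N}\,\widehat{g}(A)\delta_{i}$. So $T_{i}g$ is $\sqrt{N}$ times the $i^{th}$ column of the matrix
\begin{equation*}
\widehat{g}(A)=\chi\,\mathrm{diag}\big(\widehat{g}(\lambda_{1}),\dots,\widehat{g}(\lambda_{N})\big)\,\chi^{\ast}.
\end{equation*}
Now suppose $\sum_{i=1}^{m}\alpha_{i}T_{i}g=0$, and extend the coefficients by $\alpha_{i}=0$ for $m<i\leq N$. Then $\sqrt{N}\,\widehat{g}(A)\alpha=0$ with $\alpha=(\alpha_{1},\dots,\alpha_{N})^{T}$.

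The key step is to show $\widehat{g}(A)$ is injective. Since $\{\chi_{l}\}$ is orthonormal, $\chi$ is unitary. The diagonal factor is invertible because $|\widehat{g}(\lambda_{l})|>0$ for every $l$. Hence $\widehat{g}(A)$ is a product of invertible matrices, and its inverse is $\chi\,\mathrm{diag}(\widehat{g}(\lambda_{l})^{-1})\chi^{\ast}$. Therefore $\alpha=0$, which gives linear independence of $\V$ for every $m\leq N$.

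An equivalent route avoids matrices by taking graph Fourier transforms. Using Proposition~\ref{convpro}(1) and \eqref{remark}, the relation becomes $\widehat{g}(\lambda_{l})\sum_{i}\alpha_{i}\overline{\chi_{l}(i)}=0$ for all $l$. Dividing by $\widehat{g}(\lambda_{l})\neq 0$ yields $\langle \alpha,\chi_{l}\rangle=0$ for all $l$, hence $\alpha=0$ by completeness of the orthonormal basis $\{\chi_{l}\}$. I expect no real obstacle here; the only care needed is the identification $g\ast\delta_{i}=\widehat{g}(A)\delta_{i}$, which is exactly \eqref{conmatri} applied with the roles of $f$ and $g$ swapped, using commutativity (Proposition~\ref{convpro}(3)).
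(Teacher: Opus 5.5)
Your proof is correct and is essentially the paper's argument. Your second route is exactly what the paper does: Fourier-transform, divide by the nonvanishing $\widehat{g}(\lambda_{l})$, then use that $\{\widehat{\delta_{i}}\}$ is an orthonormal basis. The matrix route is the same argument written through $\widehat{g}(A)=\chi\,\mathrm{diag}(\widehat{g}(\lambda_{1}),\dots,\widehat{g}(\lambda_{N}))\,\chi^{\ast}$, with the small bonus that it shows $\{T_{i}g : 1\leq i\leq N\}$ is the column set of an invertible matrix and hence a basis of $\mathbb{C}^{N}$.
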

\begin{proof}
	Since the graph Fourier transform is unitary, it is enough to show that $\{\widehat{g}(\cdot)\widehat{\delta_{i}}(\cdot) : 1\leq i\leq m\}$, is linearly independent in $\R$. Let $l\in \I$ and $m\leq N$. Assume that $\sum_{i=1}^{m}\alpha_{i}\widehat{g}(\lambda_{l})\widehat{\delta_{i}}(\lambda_{l})=0$. This means $\widehat{g}(\lambda_{l})\sum_{i=1}^{m}\alpha_{i}\widehat{\delta_{i}}(\lambda_{l})=0.$
	As $|\widehat{g}(\lambda_{l})|>0$ and $\{\widehat{\delta}_{i} : 1\leq i \leq N\}$ is an orthonormal basis for $\R$, we get $\alpha_{i}=0$ for all $i\in \{1,...,m\}$, thus proving our assertion.
\end{proof}

\begin{theorem}\label{linenecc}
	Let $g\in \R$. Assume that the collection $\V$, $m\leq N$, is linearly independent in $\R$. Then there exists a subcollection $\{l_{1},...,l_{m}\}$ in $\{1,...,N\}$ such that $\widehat{g}(\lambda_{l_{k}})\neq 0$ for all $k=1,...,m$.
\end{theorem}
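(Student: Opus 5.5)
We argue by rank. The idea is to express the system $\V$ as the image of a fixed set of vectors under the diagonal multiplier $\widehat{g}(A)$, and then compare ranks.

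First, by the definition of generalized translation and \eqref{conmatri}, we have
$$T_{i}g=\sqrt{N}\,(g\ast\delta_{i})=\sqrt{N}\,\widehat{g}(A)\delta_{i}.$$
Hence $\V$ is the image of $\{\delta_{1},\dots,\delta_{m}\}$ under the linear map $\sqrt{N}\,\widehat{g}(A)$. Equivalently, apply the unitary graph Fourier transform, as in the proof of the preceding theorem. Then $\V$ is linearly independent if and only if the vectors
$$\widehat{g}\,\widehat{\delta_{i}}=D\,\widehat{\delta_{i}},\quad 1\leq i\leq m,$$
are linearly independent. Here $D=\mathrm{diag}(\widehat{g}(\lambda_{1}),\dots,\widehat{g}(\lambda_{N}))$.

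Next, suppose $\V$ is linearly independent. Then the $m$ vectors $D\widehat{\delta_{1}},\dots,D\widehat{\delta_{m}}$ are linearly independent and all lie in the range of $D$. Therefore
$$m\leq \operatorname{rank}(D)=\#\{l\in\I : \widehat{g}(\lambda_{l})\neq 0\}.$$
Concretely, every vector $D\widehat{\delta_{i}}$ vanishes at each coordinate $l$ with $\widehat{g}(\lambda_{l})=0$. So these vectors lie in the coordinate subspace spanned by the standard basis vectors $e_{l}$ with $\widehat{g}(\lambda_{l})\neq 0$, and that subspace has dimension $\#\{l : \widehat{g}(\lambda_{l})\neq0\}$.

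Finally, since the set $\{l : \widehat{g}(\lambda_{l})\neq 0\}$ has at least $m$ elements, we choose $m$ distinct indices $l_{1},\dots,l_{m}$ from it. These satisfy $\widehat{g}(\lambda_{l_{k}})\neq 0$ for all $k=1,\dots,m$, which is the claim.

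No step is a real obstacle. The only point needing care is the identification $T_{i}g=\sqrt{N}\,\widehat{g}(A)\delta_{i}$, or equivalently the Fourier-side description, together with the observation that the vectors $\widehat{g}\,\widehat{\delta_{i}}$ are supported on the set where $\widehat{g}$ does not vanish. After that, the statement is just the fact that a subspace of dimension $r$ cannot contain more than $r$ linearly independent vectors.
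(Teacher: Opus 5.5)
Your proof is correct, but its key step differs from the paper's. The paper also passes to the Fourier side. It then forms the $m\times N$ matrix with rows $\widehat{g}\,\widehat{\delta_i}$ and uses that this matrix has rank $m$ to extract a nonsingular $m\times m$ minor on some columns $l_1,\dots,l_m$. Finally it factors $\prod_{k}\widehat{g}(\lambda_{l_k})$ out of that determinant.

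You instead note that every $D\widehat{\delta_i}$ lies in the coordinate subspace supported on $\{l : \widehat{g}(\lambda_l)\neq 0\}$, and you count dimensions. This avoids determinants and gives the cleaner quantitative statement $m\le \operatorname{rank}\widehat{g}(A)=\#\{l : \widehat{g}(\lambda_l)\neq 0\}$. In fact, since $\delta_1,\dots,\delta_N$ span $\mathbb{C}^N$, your identity $T_i g=\sqrt{N}\,\widehat{g}(A)\delta_i$ shows $V(g)=\widehat{g}(A)\mathbb{C}^N$, which has exactly this dimension.

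The paper's minor argument extracts slightly more. Its indices also satisfy $\det\bigl[\widehat{\delta_i}(\lambda_{l_k})\bigr]_{k,i}=\det\bigl[\overline{\chi_{l_k}(i)}\bigr]_{k,i}\neq 0$, which ties the chosen eigen-indices to the vertices $1,\dots,m$. An arbitrary choice of $m$ indices from the support need not have this property: for $m=1$ you might pick $l_1$ with $\chi_{l_1}(1)=0$. For the theorem as stated, however, your selection suffices.
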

\begin{proof}
	Since the graph Fourier transform is unitary, $
	\{\widehat{g}(\cdot)\widehat{\delta_{i}}(\cdot) : 1\leq i\leq m\}$ is linearly independent. Consider the $m\times N$ matrix, say $A$, \[\begin{pmatrix}
		\widehat{g}(\lambda_{1})\widehat{\delta_{1}}(\lambda_{1}) & \widehat{g}(\lambda_{2})\widehat{\delta_{1}}(\lambda_{2})& \dots & \widehat{g}(\lambda_{N})\widehat{\delta_{1}}(\lambda_{N})\\
		\widehat{g}(\lambda_{1})\widehat{\delta_{2}}(\lambda_{1}) & \widehat{g}(\lambda_{2})\widehat{\delta_{2}}(\lambda_{2})& \dots & \widehat{g}(\lambda_{N})\widehat{\delta_{1}}(\lambda_{N})\\
		\vdots & \vdots & \ddots & \vdots\\
		\widehat{g}(\lambda_{1})\widehat{\delta_{m}}(\lambda_{1}) & \widehat{g}(\lambda_{2})\widehat{\delta_{m}}(\lambda_{2})& \dots & \widehat{g}(\lambda_{N})\widehat{\delta_{m}}(\lambda_{N})
	\end{pmatrix}
	\] Notice that rank $A=m$. Therefore there exists $m\times m$ submatrix, say $A_{m}$, such that $det(A_{m})\neq 0$. More precisely, there exists a collection $\{l_{1},...,l_{m}\}$ such that \[\begin{vmatrix}
		\widehat{g}(\lambda_{l_1})\widehat{\delta_{1}}(\lambda_{l_1}) & \widehat{g}(\lambda_{l_1})\widehat{\delta_{2}}(\lambda_{l_1})& \dots & \widehat{g}(\lambda_{l_1})\widehat{\delta_{m}}(\lambda_{l_1})\\
		\widehat{g}(\lambda_{l_2})\widehat{\delta_{1}}(\lambda_{l_2}) & \widehat{g}(\lambda_{l_2})\widehat{\delta_{2}}(\lambda_{l_2})& \dots & \widehat{g}(\lambda_{l_2})\widehat{\delta_{m}}(\lambda_{l_2})\\
		\vdots & \vdots & \ddots & \vdots\\
		\widehat{g}(\lambda_{l_m})\widehat{\delta_{1}}(\lambda_{l_m}) & \widehat{g}(\lambda_{l_m})\widehat{\delta_{2}}(\lambda_{l_m})& \dots & \widehat{g}(\lambda_{l_m})\widehat{\delta_{m}}(\lambda_{l_m})
	\end{vmatrix}\neq 0,\] which implies that
	\[\widehat{g}(\lambda_{l_1})\widehat{g}(\lambda_{l_2})\cdots\widehat{g}(\lambda_{l_m})\begin{vmatrix}
		\widehat{\delta_{1}}(\lambda_{l_1}) & \widehat{\delta_{2}}(\lambda_{l_1})& \dots & \widehat{\delta_{m}}(\lambda_{l_1})\\
		\widehat{\delta_{1}}(\lambda_{l_2}) & \widehat{\delta_{2}}(\lambda_{l_2})& \dots & \widehat{\delta_{m}}(\lambda_{l_2})\\
		\vdots & \vdots & \ddots & \vdots\\
		\widehat{\delta_{1}}(\lambda_{l_m}) & \widehat{\delta_{2}}(\lambda_{l_m})& \dots & \widehat{\delta_{m}}(\lambda_{l_m})
	\end{vmatrix}\neq 0,\] and hence $\widehat{g}(\lambda_{l_k})\neq 0$ for all $k\in \{1,...,m\}.$  
\end{proof}

\begin{remark}\label{orthosuff}
	Let $g\in \R$ be such that $|\widehat{g}(\lambda_{l})|=\frac{1}{\sqrt{N}}$ for all $l=1,...,N$. Then the collection $\V$ is an orthonormal system in $\R$, where  $m\leq N$. The proof is exactly similar to the proof of Theorem \ref{ort}. However for the converse we have the following result.
\end{remark}

\begin{theorem}
	Let $g\in \R$. Suppose that the collection $\V$ is an orthonormal system in $\R$. Then there exists a subcollection $l_{1},...,l_{m}$ in $\{1,...,N\}$ such that $|\widehat{g}(\lambda_{l_k})|>0$, for $k=1,...,m.$ In addition, if the subcollection is unique, then  $|\widehat{g}(\lambda_{l_{k}})|=\frac{1}{\sqrt{N}\sqrt{\sum_{q=1}^{m}|\widehat{\delta_{q}}(\lambda_{l_k})|^{2}}}$, where $k=1,...,m$.
\end{theorem}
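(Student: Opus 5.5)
The first assertion follows from Theorem \ref{linenecc}. An orthonormal system is linearly independent, so $\V$ is linearly independent. Theorem \ref{linenecc} then produces indices $l_{1},\dots,l_{m}$ with $\widehat{g}(\lambda_{l_k})\neq 0$, that is, $|\widehat{g}(\lambda_{l_k})|>0$ for $k=1,\dots,m$.

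For the second assertion I would read ``the subcollection is unique'' as follows: $\{l_{1},\dots,l_{m}\}$ is exactly the support of $\widehat{g}$, so $\widehat{g}(\lambda_{l})=0$ for every $l\notin\{l_{1},\dots,l_{m}\}$. I would then use the diagonal case $i=j$ of the computation \eqref{orthoij} from the proof of Theorem \ref{ort}. Orthonormality gives, for each $1\leq i\leq m$,
\begin{equation*}
1=\|T_{i}g\|^{2}=N\sum_{k=1}^{m}|\widehat{g}(\lambda_{l_k})|^{2}|\widehat{\delta_{i}}(\lambda_{l_k})|^{2}.
\end{equation*}
Off the diagonal, for $i\neq j$, it gives $\sum_{k}|\widehat{g}(\lambda_{l_k})|^{2}\widehat{\delta_{i}}(\lambda_{l_k})\overline{\widehat{\delta_{j}}(\lambda_{l_k})}=0$.

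Next I would pass to matrices. Let $D=\mathrm{diag}(|\widehat{g}(\lambda_{l_k})|^{2})_{k=1}^{m}$, and let $U$ be the $m\times m$ matrix with entries $U_{k,q}=\widehat{\delta_{q}}(\lambda_{l_k})=\overline{\chi_{l_k}(q)}$. The orthonormality relations say exactly that $NU^{\ast}DU=I_{m}$. The determinant computation in Theorem \ref{linenecc} shows that $U$ is invertible. Hence $ND=(UU^{\ast})^{-1}$, so $(UU^{\ast})^{-1}$ is diagonal, and therefore $UU^{\ast}$ is diagonal as well. The diagonal entries of $UU^{\ast}$ are $\sum_{q=1}^{m}|\widehat{\delta_{q}}(\lambda_{l_k})|^{2}$. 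Comparing diagonals then gives
\begin{equation*}
N|\widehat{g}(\lambda_{l_k})|^{2}=\Big(\sum_{q=1}^{m}|\widehat{\delta_{q}}(\lambda_{l_k})|^{2}\Big)^{-1},
\end{equation*}
which is the claimed formula.

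I expect the main obstacle to be justifying the reduction to exactly $m$ indices, that is, making precise that uniqueness forces $\widehat{g}$ to vanish off $\{l_{1},\dots,l_{m}\}$. Without that, $U$ is an $m\times p$ matrix with $p>m$ and $ND=(UU^{\ast})^{-1}$ no longer holds. My first attempt would be the following argument. If some further $l$ had $\widehat{g}(\lambda_{l})\neq 0$, the $m\times N$ matrix of Theorem \ref{linenecc} restricted to the support would have rank $m$ with more than $m$ columns. One could then exchange a column and obtain a different nonsingular $m\times m$ minor, contradicting uniqueness. I would verify that this exchange always succeeds, and otherwise state the support interpretation as the meaning of uniqueness.
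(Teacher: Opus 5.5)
Your proposal is correct and follows the paper's argument. The paper also restricts the sums to the $m$ indices $l_1,\dots,l_m$. It then uses that the square matrix $\sqrt{N}\,\mathrm{diag}(\widehat{g}(\lambda_{l_k}))\,U$ has orthonormal columns and therefore orthonormal rows, which is equivalent to your identity $ND=(UU^{\ast})^{-1}$ compared on the diagonal.

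The obstacle you flag is not real under the statement's wording, because the subcollection is characterized only by $\widehat{g}(\lambda_{l_k})\neq 0$. Any further $l$ with $\widehat{g}(\lambda_l)\neq 0$ could be swapped in for some $l_k$, so uniqueness means exactly $\mathrm{supp}\,\widehat{g}=\{l_1,\dots,l_m\}$, which the paper uses tacitly. No minor-exchange argument is needed. That exchange would in fact fail for an index $l$ with $\chi_l(1)=\dots=\chi_l(m)=0$, though such terms drop out of the orthonormality sums anyway.
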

\begin{proof}
	Since the set $\V$ is an orthonormal system, it is linearly independent in $\R$. Then, by Theorem \ref{linenecc}, there exists a subcollection $l_{1},...,l_{m}$ in $\{1,...,N\}$ such that $|\widehat{g}(\lambda_{l_k})|>0$, for $k=1,...,m.$  By using similar steps as in \eqref{orthoij}, we have $$N\sum_{l=1}^{N}|\widehat{g}(\lambda_{l})|^{2}\widehat{\delta_{i}}(\lambda_{l})\overline{\widehat{\delta_{j}}(\lambda_{l})}=\langle T_{i}g, T_{j}g \rangle_{\R}=\begin{cases}
		1 & \text{if}\hspace{3mm} i=j\\
		0 & \hspace{5mm}\text{otherwise}.
	\end{cases}$$
	In addition, if the subcollection $\{l_1,l_2,...,l_m\}$ is unique, then we have  $$N\sum_{k=1}^{m}|\widehat{g}(\lambda_{l_{k}})|^{2}\widehat{\delta_{i}}(\lambda_{l_{k}})\overline{\widehat{\delta_{j}}(\lambda_{l_{k}})}=\langle T_{i}g, T_{j}g \rangle_{\R}=\begin{cases}
		1 & \text{if}\hspace{3mm} i=j\\
		0 & \hspace{5mm}\text{otherwise}.
	\end{cases}$$ Thus, we get a matrix \[\begin{pmatrix}
		\sqrt{N}\widehat{g}(\lambda_{l_1})\widehat{\delta_{1}}(\lambda_{l_{1}})& \sqrt{N}\widehat{g}(\lambda_{l_1})\widehat{\delta_{2}}(\lambda_{l_{1}}) & \dots \sqrt{N}\widehat{g}(\lambda_{l_1})\widehat{\delta_{m}}(\lambda_{l_{1}})\\
		\sqrt{N}\widehat{g}(\lambda_{l_2})\widehat{\delta_{1}}(\lambda_{l_{2}})& \sqrt{N}\widehat{g}(\lambda_{l_2})\widehat{\delta_{2}}(\lambda_{l_{2}}) & \dots \sqrt{N}\widehat{g}(\lambda_{l_2})\widehat{\delta_{m}}(\lambda_{l_{2}})\\
		\vdots & \vdots & \vdots \\
		\sqrt{N}\widehat{g}(\lambda_{l_m})\widehat{\delta_{1}}(\lambda_{l_{m}})& \sqrt{N}\widehat{g}(\lambda_{l_m})\widehat{\delta_{2}}(\lambda_{l_{m}}) & \dots \sqrt{N}\widehat{g}(\lambda_{l_m})\widehat{\delta_{m}}(\lambda_{l_{m}})
	\end{pmatrix}\] whose column vectors form an orthonormal system in $\mathbb{C}^{m}$. Since it is an $m\times m$ matrix, the corresponding row vectors also form an orthonormal system, which inturn leads to the fact that $N|\widehat{g}(\lambda_{l_{k}})|^{2}\sum_{q=1}^{m}|\widehat{\delta_{q}}(\lambda_{l_{k}})|^{2}=1$ for all $k=1,...,m$, proving our assertion.
\end{proof}
	\section{Shift invariant spaces with multiple generators}
\begin{definition}
	Let $\{g_{s} : 1\leq s\leq M\}$ be a finite set in $\R$. Then the shift invariant space generated by $g_{1},...,g_{M}$, denoted by $V(g_{1},...,g_{M})$, is defined to be $\Span\{T_{i}g_{s} : 1\leq i\leq N,~1\leq s\leq M\}$.\\
	
	 In fact, let $f\in V(g_{1},...,g_{M})$. Then we can write $f=\sum\limits_{i=1}^{N}\sum\limits_{s=1}^{M}\alpha_{i,s}T_{i}g_{s}$. Then proceeding as in Section $3$, we can show that $V(g_{1},...,g_{M})$ is a shift invariant space.
\end{definition}
\begin{theorem}\label{multi}
	Let $\{g_{s} : 1\leq s\leq M\}$ be a finite set in $\R$. Then, the family $\{T_{i}g_{s} : 1\leq i \leq N, 1\leq s\leq M\}$ is a frame for $\R$ with frame bounds $A$ and $B$ if and only if
	\begin{equation}\label{framens}
		\frac{A}{N}\leq \sum\limits_{s=1}^{M}|\widehat{g_{s}}(\lambda_{l})|^{2}\leq \frac{B}{N}\quad\text{for all}~l\in \I.
	\end{equation}
\end{theorem}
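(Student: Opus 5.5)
The plan is to compute the frame sum $\sum_{s=1}^{M}\sum_{i=1}^{N}|\langle f, T_{i}g_{s}\rangle|^{2}$ explicitly on the Fourier side. We then observe that it equals a weighted version of $\|\widehat{f}\|^{2}$, with weights $N\sum_{s}|\widehat{g_{s}}(\lambda_{l})|^{2}$.

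First, fix $f\in\R$ and use the Parseval identity together with property (1) of Proposition \ref{convpro}, exactly as in the derivation of \eqref{orthoij}:
\begin{equation*}
\langle f, T_{i}g_{s}\rangle=\sqrt{N}\langle \widehat{f},\widehat{g_{s}}\,\widehat{\delta_{i}}\rangle=\sqrt{N}\sum_{l=1}^{N}\widehat{f}(\lambda_{l})\overline{\widehat{g_{s}}(\lambda_{l})}\,\overline{\widehat{\delta_{i}}(\lambda_{l})}=\sqrt{N}\,\langle F_{s},\widehat{\delta_{i}}\rangle,
\end{equation*}
where $F_{s}(\lambda_{l}):=\widehat{f}(\lambda_{l})\overline{\widehat{g_{s}}(\lambda_{l})}$.

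By Remark \ref{delortho}, $\{\widehat{\delta_{i}}\}_{i=1}^{N}$ is an orthonormal basis of $\mathbb{C}^{N}$. Summing over $i$ with Parseval in that basis therefore gives
\begin{equation*}
\sum_{i=1}^{N}|\langle f,T_{i}g_{s}\rangle|^{2}=N\|F_{s}\|^{2}=N\sum_{l=1}^{N}|\widehat{f}(\lambda_{l})|^{2}|\widehat{g_{s}}(\lambda_{l})|^{2}.
\end{equation*}
Summing over $s$ yields the key identity
\begin{equation*}
\sum_{s=1}^{M}\sum_{i=1}^{N}|\langle f,T_{i}g_{s}\rangle|^{2}=\sum_{l=1}^{N}|\widehat{f}(\lambda_{l})|^{2}\Big(N\sum_{s=1}^{M}|\widehat{g_{s}}(\lambda_{l})|^{2}\Big),
\end{equation*}
while $\|f\|^{2}=\sum_{l}|\widehat{f}(\lambda_{l})|^{2}$.

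Sufficiency is then immediate. If \eqref{framens} holds, the weight lies in $[A,B]$ for every $l$, so the sum lies between $A\|f\|^{2}$ and $B\|f\|^{2}$.

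For necessity, I would test the frame inequality on $f=\chi_{l_{0}}$. Its Fourier transform is $\widehat{f}(\lambda_{l})=\delta_{l,l_{0}}$ and $\|f\|=1$, so the identity collapses to
\begin{equation*}
A\le N\sum_{s}|\widehat{g_{s}}(\lambda_{l_{0}})|^{2}\le B.
\end{equation*}
Since $l_{0}$ is arbitrary, this is \eqref{framens}.

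The only point needing care is the use of Remark \ref{delortho}, i.e.\ that $\{\widehat{\delta_{i}}\}$ is orthonormal, so that the sum over translates reduces to a Parseval identity. This holds because the matrix $\chi$ of eigenvectors is unitary, which makes its rows $\big(\overline{\chi_{l}(i)}\big)_{l}$ orthonormal. The rest is routine bookkeeping of conjugates.
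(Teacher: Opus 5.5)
Your proposal is correct and follows essentially the same route as the paper. The paper also uses Parseval together with the orthonormality of $\{\widehat{\delta_i}\}$ (Remark \ref{delortho}) to reduce the frame sum to $N\sum_{l}|\widehat{f}(\lambda_l)|^2\sum_{s}|\widehat{g_s}(\lambda_l)|^2$. It likewise proves necessity by testing on the signal whose graph Fourier transform is the $l$-th standard basis vector, which is exactly your choice $f=\chi_{l_0}$.
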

\begin{proof}
	By using the Plancherel theorem for the graph Fourier transform and Remark \ref{delortho}, we have
	\begin{align}\label{frameequ}
		\sum\limits_{i=1}^{N}\sum\limits_{s=1}^{M}|\langle f, T_{i}g_{s} \rangle |^{2}&=\sum\limits_{i=1}^{N}\sum\limits_{s=1}^{M}|\langle \widehat{f}, \widehat{T_{i}g_{s}} \rangle |^{2}\nonumber\\
		&=N\sum\limits_{i=1}^{N}\sum\limits_{s=1}^{M}|\langle \widehat{f}, \widehat{g_{s}}\widehat{\d_{i}} \rangle |^{2}\nonumber\\
		&=N\sum\limits_{s=1}^{M}\sum\limits_{i=1}^{N}|\langle \widehat{f}~\overline{\widehat{g_{s}}},\widehat{\d_{i}} \rangle |^{2}\nonumber\\
		&=N\sum\limits_{s=1}^{M}\|\widehat{f}~\overline{\widehat{g_{s}}}\|^{2}\nonumber\\
		&=N\sum\limits_{s=1}^{M}\sum\limits_{l=1}^{N}|\widehat{f}(\lambda_{l})|^{2}|\widehat{g_{s}}(\lambda_{l})|^{2}
	\end{align}
	Assume that $\{T_{i}g_{s} : 1\leq i\leq N, 1\leq s\leq M\}$ is a frame for $\R$, that is, for some $A,B>0$, \begin{equation}\label{framedef}
		A\|f\|^{2}\leq \sum\limits_{i=1}^{N}\sum\limits_{s=1}^{M}|\langle f, T_{i}g_{s} \rangle |^{2} \leq B\|f\|^{2},\quad\text{for all}~f\in \R.
	\end{equation}
	By substituting \eqref{frameequ} in \eqref{framedef}, we get
	\begin{equation}\label{framedef1}
		A\|f\|^{2}\leq N\sum\limits_{l=1}^{N}|\widehat{f}(\lambda_{l})|^{2}\bigg(\sum\limits_{s=1}^{M}|\widehat{g_{s}}(\lambda_{l})|^{2}\bigg)\leq B\|f\|^{2},\quad\text{for all}~f\in \R.
	\end{equation}
	In particular, for each $m=1,...,N$, by choosing $f_{m}\in \R$ such that $\widehat{f_{m}}(\lambda_{m})=1$ and $\widehat{f_{m}}(\lambda_{l})=0$ for $l\neq m$ in \eqref{framedef1}, we obtain our required result. Conversely, assume that \eqref{framens} holds. Then, from \eqref{frameequ}, we have
	$$A\|f\|^{2}\leq \sum\limits_{i=1}^{N}\sum\limits_{s=1}^{M}|\langle f, T_{i}g_{s} \rangle |^{2}\leq B\|f\|^{2}\quad\text{for all}~f\in \R.$$
\end{proof}
As a consequence, we obtain a necessary and sufficient condition for a graph wavelet system to form a frame for $\mathbb{C}^{N}.$
\begin{corollary}\label{dilation}
	Let $g\in \R$ and $J$ be a finite susbset of $\mathbb{R}^{+}$. Then the spectral graph wavelet system $\{T_{i}D_{s}g : 1\leq i\leq N,~s\in J\}$ is a frame for $\R$ with frame bounds $A$ and $B$ if and only if $\frac{A}{N} \leq \sum\limits_{s\in J}|\widehat{g}(s\lambda_{l})|^{2}\leq \frac{B}{N}\quad \text{for all}~l\in \I.$
\end{corollary}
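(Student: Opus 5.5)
This corollary follows directly from Theorem \ref{multi}, applied to the finite family of generators $\{D_{s}g : s\in J\}$. Since $J$ is a finite subset of $\mathbb{R}^{+}$, enumerate it as $J=\{s_{1},\dots,s_{M}\}$ and set $g_{k}:=D_{s_{k}}g\in\R$ for $k=1,\dots,M$. With this enumeration, the spectral graph wavelet system $\{T_{i}D_{s}g : 1\leq i\leq N,~s\in J\}$ is exactly the family $\{T_{i}g_{k} : 1\leq i\leq N,~1\leq k\leq M\}$.

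By Theorem \ref{multi}, this family is a frame for $\R$ with bounds $A$ and $B$ if and only if
\[
\frac{A}{N}\leq \sum_{k=1}^{M}|\widehat{g_{k}}(\lambda_{l})|^{2}\leq \frac{B}{N}\quad\text{for all}~l\in\I .
\]
It remains to identify $\widehat{g_{k}}(\lambda_{l})$. By the definition of dilation, $\widehat{D_{s}g}(\lambda_{l})=\widehat{g}(s\lambda_{l})$. Here $D_{s}g$ is the signal whose graph Fourier transform on $\text{spec}(A)$ is $\lambda_{l}\mapsto \widehat{g}(s\lambda_{l})$, recovered through the inversion formula. Substituting gives $\sum_{k=1}^{M}|\widehat{g_{k}}(\lambda_{l})|^{2}=\sum_{s\in J}|\widehat{g}(s\lambda_{l})|^{2}$, which is the stated condition.

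There is no real obstacle; only two bookkeeping points need care. First, $D_{s}g$ must be a genuine vector in $\R$, so that Theorem \ref{multi} applies; this holds because the inverse graph Fourier transform of any function on $\text{spec}(A)$ is a vector in $\R$. Second, the sum over $s\in J$ must not double count. Since $J$ is a set, each $s$ contributes exactly one generator.
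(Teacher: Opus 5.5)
Your proposal is correct and is the paper's proof: the paper applies Theorem \ref{multi} with generators $g_s = D_s g$ and uses $\widehat{D_s g}(\lambda_l)=\widehat{g}(s\lambda_l)$. Enumerating $J$ and checking that $D_s g\in\R$ just make explicit bookkeeping that the paper leaves implicit.
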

\begin{proof}
	In Theorem \ref{multi}, take $g_{s}=D_{s}g$.
\end{proof}
\begin{example}
	Let $G$ denote the graph given below.
	\begin{figure}[!htb]
	\begin{tikzpicture} 
		\node[draw, circle] (1) at (0,0) {1}; 
		\node[draw, circle] (2) at (2,2) {2}; \node[draw, circle] (3) at (2,-2) {3}; \node[draw, circle] (4) at (-2,0) {4}; 
		 \draw (1) -- (2); \draw (1) -- (3); \draw (1) -- (4); 
		\end{tikzpicture}
	\captionsetup{labelformat=empty}
\caption{Figure : G}
\end{figure}
	The graph Laplacian $\mathcal{L}$ for the graph $G$ can be computed as \begin{equation}
		\begin{pmatrix}
			3 & -1 & -1 & -1\\
			-1 & 1 & 0 & 0\\
			-1 & 0 & 1 & 0\\
			-1 & 0 & 0 &1
		\end{pmatrix}
	\end{equation} By using the matrix equation $\mathcal{L}v=\lambda v$, one can easily show that the eigenvalues and eigenvectors for $\mathcal{L}$ are given by $$\lambda_{1}=0,~\lambda_{2}=\lambda_{3}=1,~\lambda_{4}=4$$ and $\chi_{1}=(1,1,1,1)^{T}$, $\chi_{2}=(0,1,-1,0)^{T}$, $\chi_{3}=(0,1,0,-1)^{T}$, $\chi_{4}=(3,-1,-1,-1)^{T}$ respectively. \\
Consider the Lagrange interpolation polynomial $p$ with respect to the points $(0,1)$, $(1,1)$ and $(4,0)$, given by $p(x)=\frac{1}{12}(-x^{2}+x+12)$.  Let $g\in \R$ be a signal such that $\widehat{g}=p$. Then $\widehat{g}(\lambda_{l})=1$, for $l=1,2,3$ and $\widehat{g}(\lambda_{4})=0$. Let $J$ be any finite subset of $\mathbb{R}^{+}\setminus \mathbb{N}$. Then, it can be easily verified that $\sum\limits_{s\in J}|\widehat{g}(s\lambda_{l})|>0$, for $l=1,2,3,4$. Therefore, by Corollary \ref{dilation}, we can conclude that $\{T_{i}D_{s}g : 1\leq i\leq 4,s\in J\}$ is a frame for $\R$.  
\end{example}
\begin{corollary}
	Let $g\in \R$. Then, the family $\{T_{i}M_{s}g : 1\leq i,s\leq N\}$ is a frame for $\R$ with frame bounds $A$ and $B$ if and only if $\frac{A}{N} \leq \sum\limits_{n=1}^{N}|\chi_{l}(n)|^{2}|g(n)|^{2}\leq \frac{B}{N}\quad \text{for all}~l\in \I.$
\end{corollary}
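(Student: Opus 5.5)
The plan is to apply Theorem \ref{multi} with $M=N$ generators $g_{s}=M_{s}g$, $1\leq s\leq N$. That theorem says $\{T_{i}M_{s}g : 1\leq i,s\leq N\}$ is a frame for $\R$ with bounds $A$ and $B$ if and only if
\begin{equation*}
\frac{A}{N}\leq \sum_{s=1}^{N}|\widehat{M_{s}g}(\lambda_{l})|^{2}\leq \frac{B}{N}\quad\text{for all}~l\in\I .
\end{equation*}
So the whole task reduces to the identity
\begin{equation*}
\sum_{s=1}^{N}|\widehat{M_{s}g}(\lambda_{l})|^{2}=\sum_{n=1}^{N}|\chi_{l}(n)|^{2}|g(n)|^{2},\qquad l\in\I .
\end{equation*}

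To prove this identity, first compute the graph Fourier transform of the modulated signal from the definitions:
\begin{equation*}
\widehat{M_{s}g}(\lambda_{l})=\sum_{n=1}^{N}\chi_{s}(n)g(n)\overline{\chi_{l}(n)} .
\end{equation*}
For fixed $l$, set $h_{l}(n)=g(n)\overline{\chi_{l}(n)}$. Then the expression above equals $\langle h_{l},\overline{\chi_{s}}\rangle$, where $\overline{\chi_{s}}$ denotes the entrywise complex conjugate of $\chi_{s}$.

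Next, observe that $\{\overline{\chi_{s}}\}_{s=1}^{N}$ is again an orthonormal basis of $\R$, because $\langle \overline{\chi_{s}},\overline{\chi_{t}}\rangle=\overline{\langle \chi_{s},\chi_{t}\rangle}=\delta_{st}$. Parseval's identity for this basis then gives
\begin{equation*}
\sum_{s=1}^{N}|\langle h_{l},\overline{\chi_{s}}\rangle|^{2}=\|h_{l}\|^{2}=\sum_{n=1}^{N}|g(n)|^{2}|\chi_{l}(n)|^{2} .
\end{equation*}
This is exactly the required identity. Substituting it into Theorem \ref{multi} yields both directions of the corollary simultaneously.

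The only point needing care is the conjugate-basis step: one has to recognize the sum over $s$ as a full Parseval sum, which works because the index $s$ ranges over all of $\{1,\dots,N\}$. Everything else is direct substitution into Theorem \ref{multi}.
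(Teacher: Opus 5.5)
Your proof is correct and follows the paper's route: you apply Theorem \ref{multi} with $g_s=M_sg$ and reduce the statement to the identity $\sum_{s=1}^{N}|\widehat{M_sg}(\lambda_l)|^2=\sum_{n=1}^{N}|\chi_l(n)|^2|g(n)|^2$. The paper proves this identity by expanding the square and using $\sum_{s}\chi_s(n)\overline{\chi_s(n')}=\delta_{nn'}$, which is the same completeness fact you invoke as Parseval's identity for the conjugate basis $\{\overline{\chi_s}\}$.
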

\begin{proof}
	Now, for $1\leq l\leq N$, by using the graph Fourier transform, we get \begin{align*}
		\sum\limits_{s=1}^{N}|\widehat{M_{s}g}(\lambda_{l})|^{2}
		&=\sum\limits_{s=1}^{N}\bigg|\sum\limits_{n=1}^{N} \chi_{s}(n)g(n)\overline{\chi_{l}(n)}\bigg|^{2}\\
		&=\sum\limits_{s=1}^{N}\bigg(\sum\limits_{n=1}^{N} \chi_{s}(n)g(n)\overline{\chi_{l}(n)}\bigg)\bigg(\sum\limits_{n^{'}=1}^{N} \overline{\chi_{s}(n^{'})}\overline{g(n^{'})}\chi_{l}(n^{'})\bigg)\\
		&=\sum\limits_{n,n^{'}=1}^{N}\chi_{l}(n)\overline{\chi_{l}(n^{'})}g(n)\overline{g(n^{'})}\bigg(\sum\limits_{s=1}\chi_{s}(n)\overline{\chi_{s}(n^{'})}\bigg)\\
		&=\sum\limits_{n=1}^{N}|\chi_{l}(n)|^{2}|g(n)|^{2},
	\end{align*}by using orthonormality of the system $\{\chi_{l} : 1\leq l\leq N\}$.
	Choosing $g_{s}=M_{s}g$ in Theorem \ref{multi}, we obtain our required result. 
\end{proof}
\begin{corollary}
	Let $g\in \R$. Then, the family $\{T_{i}g : 1\leq i\leq N\}$ is a frame for $\R$ with frame bounds $A$ and $B$ if and only if $\sqrt{\frac{A}{N}}\leq |\widehat{g}(\lambda_{l})|\leq \sqrt{\frac{B}{N}}\quad\text{for all}~l\in \I.$ 
\end{corollary}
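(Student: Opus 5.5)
This is the single-generator case $M=1$ of Theorem \ref{multi}, so I would take $g_{1}=g$ there and then translate the resulting bound on $|\widehat{g}(\lambda_{l})|^{2}$ into a bound on $|\widehat{g}(\lambda_{l})|$. With one generator, the family $\{T_{i}g_{s} : 1\leq i\leq N,\ 1\leq s\leq M\}$ is exactly $\{T_{i}g : 1\leq i\leq N\}$. Theorem \ref{multi} then says this family is a frame for $\R$ with bounds $A$ and $B$ if and only if
\begin{equation*}
\frac{A}{N}\leq |\widehat{g}(\lambda_{l})|^{2}\leq \frac{B}{N}\quad \text{for all}~l\in\I.
\end{equation*}

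The remaining step is to take square roots. Since $A,B>0$ and $t\mapsto\sqrt{t}$ is strictly increasing on $[0,\infty)$, the inequality $\frac{A}{N}\leq |\widehat{g}(\lambda_{l})|^{2}\leq \frac{B}{N}$ holds if and only if $\sqrt{\frac{A}{N}}\leq |\widehat{g}(\lambda_{l})|\leq \sqrt{\frac{B}{N}}$. This equivalence holds for each $l$ separately, so both directions of the corollary follow.

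For completeness, I could also check this directly from the computation \eqref{frameequ} with $M=1$, which gives
\begin{equation*}
\sum_{i=1}^{N}|\langle f,T_{i}g\rangle|^{2}=N\sum_{l=1}^{N}|\widehat{f}(\lambda_{l})|^{2}|\widehat{g}(\lambda_{l})|^{2}.
\end{equation*}
Sufficiency follows from this together with the Parseval identity $\|f\|^{2}=\sum_{l}|\widehat{f}(\lambda_{l})|^{2}$. Necessity follows by testing on the signals $f_{m}$ with $\widehat{f_{m}}=\delta_{m}$, that is, $f_{m}=\chi_{m}$.

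No step here is a real obstacle. The only point that needs care is that the frame bounds transfer exactly, with $A/N$ and $B/N$ rather than merely up to constants. This holds because the identity above is an equality, not an estimate.
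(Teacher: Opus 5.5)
Your proposal is correct and follows the paper's intended route: the paper states this corollary without a separate proof as the case $M=1$ of Theorem \ref{multi}, and the remaining step is exactly your passage from $\frac{A}{N}\leq|\widehat{g}(\lambda_{l})|^{2}\leq\frac{B}{N}$ to the stated bounds by taking nonnegative square roots. Your optional direct check, using \eqref{frameequ} together with the test signals $f_{m}=\chi_{m}$, likewise mirrors the paper's own proof of Theorem \ref{multi}.
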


\begin{theorem}
	Let $\{g_{s} : 1\leq s\leq M\}$ and $\{h_{s} : 1\leq s\leq M\}$ be two finite subsets of $\mathbb{C}^{N}$. Then the collections $\{T_{i}g_{s} : 1\leq i \leq N,~1\leq s\leq M\}$ and $\{T_{i}h_{s} : 1\leq i \leq N,~1\leq s\leq M\}$ are dual frames for $\R$ if and only if
	\begin{equation}\label{dualframecond}
		\sum\limits_{s=1}^{M}\overline{\widehat{g_{s}}(\lambda_{l})}\widehat{h_{s}}(\lambda_{l})=\frac{1}{N}\hspace{3mm}\text{for}\hspace{3mm}1\leq l\leq N.
	\end{equation}
\end{theorem}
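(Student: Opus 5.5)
We will use Theorem \ref{dualthm}, in the form of condition (iii): the two systems are dual frames if and only if
\[
\langle f,h\rangle=\sum_{i=1}^{N}\sum_{s=1}^{M}\langle f,T_{i}g_{s}\rangle\langle T_{i}h_{s},h\rangle \quad\text{for all } f,h\in\R .
\]
Since in finite dimensions any finite spanning family is a frame, (iii) together with Theorem \ref{dualthm} gives exactly the dual frame property. (Condition (iii) forces both families to span $\R$.) So the task reduces to computing the right-hand side on the Fourier side, in the same way as \eqref{frameequ}. To avoid a clash with the generators $h_s$, we rename the second test vector $u$.

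First, by Parseval and $\widehat{T_{i}g_{s}}=\sqrt{N}\,\widehat{g_{s}}\,\widehat{\d_{i}}$, we have
\[
\langle f,T_{i}g_{s}\rangle=\sqrt{N}\,\langle \widehat f\,\overline{\widehat{g_{s}}},\widehat{\d_{i}}\rangle,
\qquad
\langle T_{i}h_{s},u\rangle=\sqrt{N}\,\langle \widehat{\d_{i}},\widehat u\,\overline{\widehat{h_{s}}}\rangle .
\]
Summing over $i$ and using that $\{\widehat{\d_{i}}\}$ is an orthonormal basis of $\R$ (Remark \ref{delortho}), the inner sum collapses to $N\langle \widehat f\,\overline{\widehat{g_{s}}},\widehat u\,\overline{\widehat{h_{s}}}\rangle$. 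Summing over $s$, the right-hand side of (iii) becomes
\[
N\sum_{l=1}^{N}\widehat f(\lambda_{l})\overline{\widehat u(\lambda_{l})}\,\sum_{s=1}^{M}\overline{\widehat{g_{s}}(\lambda_{l})}\,\widehat{h_{s}}(\lambda_{l}).
\]
The left-hand side is $\langle f,u\rangle=\sum_{l}\widehat f(\lambda_l)\overline{\widehat u(\lambda_l)}$.

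For sufficiency, if \eqref{dualframecond} holds, the two expressions agree identically, so (iii) holds. For necessity, assume (iii) and choose $f=u=f_{m}$ with $\widehat{f_{m}}=e_{m}$, as in the proof of Theorem \ref{multi}. This yields $1=N\sum_{s}\overline{\widehat{g_{s}}(\lambda_{m})}\widehat{h_{s}}(\lambda_{m})$ for each $m$, which is exactly \eqref{dualframecond}.

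The only delicate step is keeping track of the conjugations in the collapse over $i$, since the order of $g_s$ and $h_s$ determines which of them carries the conjugate. We will check this carefully using $\sum_i\langle a,\widehat{\d_i}\rangle\langle \widehat{\d_i},b\rangle=\langle a,b\rangle$.
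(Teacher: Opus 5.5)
Your proposal is correct and takes essentially the same route as the paper. Both arguments rewrite $\sum_{i,s}\langle f,T_{i}g_{s}\rangle\langle T_{i}h_{s},u\rangle$ on the Fourier side, collapse the sum over $i$ by orthonormality (you via the basis $\{\widehat{\delta_{i}}\}$, the paper via $\sum_{i}\widehat{\delta_{i}}(\lambda_{l'})\overline{\widehat{\delta_{i}}(\lambda_{l})}=\langle\chi_{l},\chi_{l'}\rangle$), compare the result with $\langle f,u\rangle$ through condition (iii) of Theorem~\ref{dualthm}, and test on $\widehat{f}=e_{m}$; the only cosmetic difference is that the paper pairs $f_{m}$ with a vector whose transform is identically $1$, while you take $u=f_{m}$, which works equally well.
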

\begin{proof}
	By making use of Parseval identity for the graph Fourier transform and property $(1)$ in Proposition \ref{convpro}, we get
	\begin{align*}
	\langle f, T_{i}g_{s} \rangle &= \sqrt{N}\langle \widehat{f}, \widehat{g_{s}}~\widehat{\delta_{i}} \rangle\\
	&=\sqrt{N}\sum_{l=1}^{N}\widehat{f}(\lambda_{l})\overline{\widehat{g_{s}}(\lambda_{l})}\overline{\widehat{\delta_{i}}(\lambda_{l})}.
	\end{align*}
	Similarly we can show that $\langle T_{i}h_{s}, f_{1} \rangle=\sqrt{N}\sum\limits_{l=1}^{N}\overline{\widehat{f_{1}}(\lambda_{l})}\widehat{h_{s}}(\lambda_{l})\widehat{\delta_{i}}(\lambda_{l}).$ Consider
	\begin{align*}\label{dualframeequa}
		\sum\limits_{i=1}^{N}\sum\limits_{s=1}^{M}\langle f, T_{i}g_{s} \rangle \langle T_{i}h_{s}, f_{1}\rangle&=N\sum\limits_{i=1}^{N}\sum\limits_{s=1}^{M} \bigg(\sum_{l=1}^{N}\widehat{f}(\lambda_{l})\overline{\widehat{g_{s}}(\lambda_{l})}\overline{\widehat{\delta_{i}}(\lambda_{l})}\bigg) \bigg(\sum_{l^{'}=1}^{N}\overline{\widehat{f_{1}}(\lambda_{l^{'}})}\widehat{h_{s}}(\lambda_{l^{'}})\widehat{\delta_{i}}(\lambda_{l^{'}})\bigg)\\
		&=N\sum_{l,l^{'}=1}^{N}\sum\limits_{s=1}^{M}\widehat{f}(\lambda_{l})\overline{\widehat{g_{s}}(\lambda_{l})}\overline{\widehat{f_{1}}(\lambda_{l^{'}})}\widehat{h_{s}}(\lambda_{l^{'}})\bigg(\sum_{i=1}^{N}\widehat{\delta_{i}}(\lambda_{l^{'}})\overline{\widehat{\delta_{i}}(\lambda_{l})}\bigg)\\
		&=N\sum_{l,l^{'}=1}^{N}\sum\limits_{s=1}^{M}\widehat{f}(\lambda_{l})\overline{\widehat{g_{s}}(\lambda_{l})}\overline{\widehat{f_{1}}(\lambda_{l^{'}})}\widehat{h_{s}}(\lambda_{l^{'}})\langle \chi_{l},\chi_{l^{'}} \rangle_{\mathbb{C}^{N}}\\
		&=N\sum_{l=1}^{N}\sum\limits_{s=1}^{M} \widehat{f}(\lambda_{l})\overline{\widehat{g_{s}}(\lambda_{l})}\overline{\widehat{f_{1}}(\lambda_{l})}\widehat{h_{s}}(\lambda_{l})\numberthis,
	\end{align*}
using \eqref{remark} and orthonormality of the system $\{\chi_{l} : 1\leq l\leq N\}$.
	Assume that $\{T_{i}h_{s} : 1\leq i \leq N,~1\leq s\leq M\}$ and $\{T_{i}g_{s} : 1\leq i \leq N,~1\leq s\leq M\}$ are dual frames. Then, for every $f,f_{1}\in \R$, we have 
	\begin{equation}\label{dualframe}
		\langle f, f_{1} \rangle =\sum\limits_{i=1}^{N}\sum\limits_{s=1}^{N}\langle f, T_{i}g_{s} \rangle \langle T_{i}h_{s}, f_{1}\rangle.
	\end{equation}
	By substituting \eqref{dualframeequa} in \eqref{dualframe}, we obtain 
	$$\langle \widehat{f}, \widehat{f_{1}} \rangle=N\sum_{l=1}^{N}\sum\limits_{s=1}^{N} \widehat{f}(\lambda_{l})\overline{\widehat{g_{s}}(\lambda_{l})}\overline{\widehat{f_{1}}(\lambda_{l})}\widehat{h_{s}}(\lambda_{l}),\quad\text{for all}~f,f_{1}\in \R.$$ In particular, for every $m=1,...,N$, choose $f=f_{m}$ such that $\widehat{f_{m}}(\lambda_{m})=1$ and $\widehat{f_{m}}(\lambda_{l})=0$ for $l\neq m$. In addition, fix $f_{1}$ such that $\widehat{f_{1}}(\lambda_{l})=1$ for all $l=1,...,N$. Then we get $$1=N\sum\limits_{s=1}^{N}\overline{\widehat{g_{s}}(\lambda_{l})}\widehat{h_{s}}(\lambda_{l}).$$ for all $l\in \{1,...,N\}$.
	Conversely assume that \eqref{dualframecond} holds. Then the result follows from \eqref{dualframeequa}.
\end{proof}
Let $\{u_{i}\}$ be a frame in a separable Hilbert space $\mathcal{H}$. Then a sequence $\{v_{i}\}$ is a dual frame for the sequence $\{u_{i}\}$ if any vector $x$ in $\mathcal{H}$ can reconstructed from its frame coefficients $\{\langle x, u_{i}\rangle\}$. In other words, every vector $x$ can be written as $x=\sum_{i}\langle x, u_i\rangle v_i$. The most natural choice is the canonical dual frame, obtained by applying the inverse of the frame operator to each frame element, but many alternate duals also exist, offering flexibility in applications. We wish to obtain a characterization for a system of generalized translations on $L^{2}(\mathbb{C}^{N})$ to be a dual frame. Towards this end, we obtain the following result. 
\begin{corollary}\label{dual}
	Let $g,h\in \R$. Then the collections $\{T_{i}g : 1\leq i \leq N\}$ and $\{T_{i}h : 1\leq i \leq N\}$ are dual frames for $\R$ if and only if
\begin{equation}
	\overline{\widehat{g}(\lambda_{l})}\widehat{h}(\lambda_{l})=\frac{1}{N}\hspace{3mm}\text{for}\hspace{3mm}1\leq l\leq N.
\end{equation}
\end{corollary}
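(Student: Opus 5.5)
This is the case $M=1$ of the preceding theorem on dual frames with multiple generators. Taking $g_{1}=g$ and $h_{1}=h$ there, the condition \eqref{dualframecond} reduces to $\overline{\widehat{g}(\lambda_{l})}\widehat{h}(\lambda_{l})=\frac{1}{N}$ for $1\leq l\leq N$, which is exactly the claimed condition. For completeness, I will also check that the direct argument goes through in this special case.

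The plan is to start from the identity obtained in the proof of the preceding theorem. With $M=1$ it reads, for all $f,f_{1}\in\R$,
\[
\sum_{i=1}^{N}\langle f,T_{i}g\rangle\langle T_{i}h,f_{1}\rangle=N\sum_{l=1}^{N}\widehat{f}(\lambda_{l})\overline{\widehat{f_{1}}(\lambda_{l})}\,\overline{\widehat{g}(\lambda_{l})}\widehat{h}(\lambda_{l}).
\]
It follows from Parseval's identity, property $(1)$ of Proposition \ref{convpro}, and the relation $\sum_{i}\widehat{\delta_{i}}(\lambda_{l'})\overline{\widehat{\delta_{i}}(\lambda_{l})}=\langle\chi_{l},\chi_{l'}\rangle=\delta_{ll'}$ from Remark \ref{delortho}.

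By Theorem \ref{dualthm}(iii), the two systems are dual frames exactly when the left-hand side equals $\langle f,f_{1}\rangle=\sum_{l}\widehat{f}(\lambda_{l})\overline{\widehat{f_{1}}(\lambda_{l})}$ for all $f,f_{1}$. Sufficiency is then immediate: if $N\overline{\widehat{g}(\lambda_{l})}\widehat{h}(\lambda_{l})=1$ for every $l$, the two expressions coincide.

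For necessity, I will test the identity against spectral delta functions. Fix $m$ and take $f=f_{1}=\chi_{m}$, so that $\widehat{f}=\widehat{f_{1}}$ is the indicator of $\lambda_{m}$. The identity then gives $1=N\overline{\widehat{g}(\lambda_{m})}\widehat{h}(\lambda_{m})$.

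The only point needing care is that Theorem \ref{dualthm} requires each system to actually be a frame. Under the condition, neither $\widehat{g}$ nor $\widehat{h}$ vanishes on $\text{spec}(A)$, so both systems are frames by the single-generator frame corollary with $A=N\min_{l}|\widehat{g}(\lambda_{l})|^{2}$, and similarly for $h$. Conversely, the reconstruction identity forces the frame property through the last assertion of Theorem \ref{dualthm}. I do not expect any genuine obstacle here.
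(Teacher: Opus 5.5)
Your proposal is correct and matches the paper: the corollary is the case $M=1$ of the preceding multi-generator dual frame theorem, and your direct check reproduces that theorem's computation, differing only in testing with $f=f_{1}=\chi_{m}$ instead of the paper's $f=f_{m}$ (with $\widehat{f_{m}}$ the indicator of $\lambda_{m}$) and $\widehat{f_{1}}\equiv 1$. Your extra remark on the frame property is harmless but not needed, since in $\mathbb{C}^{N}$ any finite family satisfying the reconstruction identity is automatically a frame.
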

Let $g\in \mathbb{C}^{N}$ and $\{T_{i}g : 1\leq i\leq N\}$ be a frame for $\mathbb{C}^{N}$. By the general definition, the canonical dual frame associated with $\{T_{i}g : 1\leq i\leq N\}$ is given by $\{S^{-1}T_{i}g : 1\leq i\leq N\}$. In order to study the frame representation, it is necessary to compute the action of $S^{-1}$ on the set $\{T_{i}g : 1\leq i\leq N\}$. The following corollary shows that it is sufficient to determine the image of $g$ under $S^{-1}$, since the remaining elements of the canonical dual frame are the generalized translates of $S^{-1}g$. 
\begin{corollary}
	Let $g\in \mathbb{C}^{N}$ and $\{T_{i}g : 1\leq i\leq N\}$ be a frame for $\mathbb{C}^{N}$. Then the canonical dual frame is given by $\{T_{i}S^{-1}g : 1\leq i\leq N\}$, where $S$ is the frame operator for the system $\{T_{i}g : 1\leq i\leq N\}$. Moreover, the graph Fourier transform of $S^{-1}g$ is $\widehat{(S^{-1}g)}(\lambda_{l})= \frac{1}{N\widehat{g}(\lambda_{l})},\quad\text{for all}~l=1,...,N.$
\end{corollary}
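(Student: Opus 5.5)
We compute the frame operator $S$ explicitly on the Fourier side. The computation is essentially the one already carried out in \eqref{dualframeequa} with $M=1$ and $h=g$.

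\textbf{Step 1: a formula for $\widehat{Sf}$.} For $f\in\R$ we have
$$Sf=\sum_{i=1}^{N}\langle f,T_{i}g\rangle T_{i}g.$$
By the Parseval identity and property $(1)$ of Proposition \ref{convpro},
$$\langle f,T_{i}g\rangle=\sqrt{N}\sum_{l}\widehat{f}(\lambda_l)\overline{\widehat{g}(\lambda_l)}\,\overline{\widehat{\delta_i}(\lambda_l)}
\quad\text{and}\quad
\widehat{T_ig}(\lambda_{l'})=\sqrt{N}\,\widehat{g}(\lambda_{l'})\widehat{\delta_i}(\lambda_{l'}).$$
Summing over $i$ leaves the factor $\sum_i\widehat{\delta_i}(\lambda_{l'})\overline{\widehat{\delta_i}(\lambda_l)}$. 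By \eqref{remark} this equals $\langle\chi_l,\chi_{l'}\rangle=\delta_{ll'}$. Hence
$$\widehat{Sf}(\lambda_l)=N|\widehat{g}(\lambda_l)|^2\widehat{f}(\lambda_l).$$
In other words, $S=N\,|\widehat g|^2(A)$ is a Fourier multiplier. The frame hypothesis, through the last corollary before Corollary \ref{dual}, gives $|\widehat g(\lambda_l)|\ge\sqrt{A/N}>0$. So $S^{-1}$ is the Fourier multiplier by $1/(N|\widehat g(\lambda_l)|^2)$.

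\textbf{Step 2: $S$ commutes with every $T_i$.} We have $\widehat{T_if}=\sqrt N\,\widehat f\,\widehat{\delta_i}$, again a multiplication on the spectral side. Two pointwise multipliers on $\{\lambda_l\}$ commute, so $ST_i=T_iS$, and hence $S^{-1}T_i=T_iS^{-1}$. Therefore $S^{-1}T_ig=T_iS^{-1}g$, which identifies the canonical dual frame as $\{T_iS^{-1}g: 1\le i\le N\}$.

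\textbf{Step 3: the transform of $S^{-1}g$.} From Step 1,
$$\widehat{S^{-1}g}(\lambda_l)=\frac{\widehat g(\lambda_l)}{N|\widehat g(\lambda_l)|^2}=\frac{1}{N\,\overline{\widehat g(\lambda_l)}}.$$
As a consistency check, this agrees with Corollary \ref{dual}, since $\overline{\widehat g}\cdot\widehat{S^{-1}g}=1/N$.

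\textbf{Main obstacle.} The stated formula is $1/(N\widehat g(\lambda_l))$, without the conjugate. This coincides with what we derive only when $\widehat g(\lambda_l)$ is real, for instance for real $g$ and real eigenvectors. Otherwise the correct expression is $1/(N\overline{\widehat g(\lambda_l)})$, and I would state it in that form, noting that the two agree in the real case.
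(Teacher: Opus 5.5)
Your proof is correct and follows essentially the paper's route, a Fourier-side computation showing $ST_i=T_iS$, only more economically: you identify $S$ once as multiplication by $N|\widehat{g}(\lambda_l)|^2$ and invert it directly, whereas the paper computes $\widehat{S(T_if)}$ and $\widehat{T_i(Sf)}$ separately and then extracts $\widehat{S^{-1}g}$ from Corollary \ref{dual}. The missing conjugate you flag is a genuine error in the statement, not in your argument: the paper's own last step via Corollary \ref{dual} yields $\overline{\widehat{g}(\lambda_l)}\,\widehat{S^{-1}g}(\lambda_l)=\frac{1}{N}$, which is your formula, and for instance $\widehat{g}(\lambda_l)=\mathrm{i}/\sqrt{N}$ for all $l$ makes $\{T_ig\}$ an orthonormal basis by Theorem \ref{ort}, so $S^{-1}g=g$, while the stated $\frac{1}{N\widehat{g}(\lambda_l)}$ equals $-\mathrm{i}/\sqrt{N}$.
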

\begin{proof}
	In order to obtain the required structure of the canonical dual frame, we aim to show that $S^{-1}T_{i}=T_{i}S^{-1}$ for $1\leq i\leq N$.
	Let $f\in \mathbb{C}^{N}$. Then, by the definition of frame operator, we get $S(T_{i}f)=\sum\limits_{p=1}^{N}\langle T_{i}f, T_{p}g\rangle T_{p}g$. Now, taking graph Fourier transform on both sides, we get \begin{align*}
		\widehat{S(T_{i}f)}(\lambda_{l})&=\sum\limits_{p=1}^{N}\langle T_{i}f, T_{p}g\rangle \widehat{T_{p}g}(\lambda_{l})\\
	&=\sqrt{N}\sum\limits_{p=1}^{N}\langle \widehat{T_{i}f}, \widehat{T_{p}g}\rangle \widehat{g}(\lambda_{l})\widehat{\delta_{p}}(\lambda_{l})\\
	&=N^{\frac{3}{2}}\sum\limits_{p=1}^{N}\langle \widehat{f}\widehat{\delta_{i}}, \widehat{g}\widehat{\delta_{p}}\rangle \widehat{g}(\lambda_{l})\widehat{\delta_{p}}(\lambda_{l}),
	\end{align*}  by making use of Parseval identity of the graph Fourier transform and property $(1)$ in Proposition \ref{convpro}. Thus
	\begin{align*}\label{st}
		\widehat{S(T_{i}f)}(\lambda_{l})	&=N^{\frac{3}{2}}\sum\limits_{p=1}^{N}\sum\limits_{q=1}^{N}\widehat{f}(\lambda_{q})\widehat{\delta_{i}}(\lambda_{q})\overline{\widehat{g}(\lambda_{q})}\overline{\widehat{\delta_{p}}(\lambda_{q})}\widehat{g}(\lambda_{l})\widehat{\delta_{p}}(\lambda_{l})\\
		&=N^{\frac{3}{2}}\sum\limits_{q=1}^{N}\widehat{f}(\lambda_{q})\widehat{\delta_{i}}(\lambda_{q})\overline{\widehat{g}(\lambda_{q})}\widehat{g}(\lambda_{l})\bigg(\sum\limits_{p=1}^{N}\overline{\widehat{\delta_{p}}(\lambda_{q})}\widehat{\delta_{p}}(\lambda_{l})\bigg)\\
		&=N^{\frac{3}{2}}\sum\limits_{q=1}^{N}\widehat{f}(\lambda_{q})\widehat{\delta_{i}}(\lambda_{q})\overline{\widehat{g}(\lambda_{q})}\widehat{g}(\lambda_{l})\langle \chi_{q}, \chi_{l} \rangle_{\mathbb{C}^{N}}\\
		&=N^{\frac{3}{2}}\widehat{f}(\lambda_{l})\widehat{\delta_{i}}(\lambda_{l})|\widehat{g}(\lambda_{l})|^{2}\numberthis.
	\end{align*}
On the other hand,
\begin{align*}
\widehat{T_{i}(Sf)}(\lambda_{l})&=\sum\limits_{p=1}^{N}\langle f, T_{p}g \rangle \widehat{T_{i}T_{p}g}(\lambda_{l})\\
&=N\sum\limits_{p=1}^{N}\langle \widehat{f}, \widehat{T_{p}g} \rangle \widehat{g}(\lambda_{l})\widehat{\delta_{i}}(\lambda_{l})\widehat{\delta_{p}}(\lambda_{l}).
\end{align*}
Then, proceeding as before, it is easy to show that
\begin{align*}\label{ts}
	\widehat{T_{i}(Sf)}(\lambda_{l})&=\sum\limits_{p=1}^{N}\langle f, T_{p}g \rangle \widehat{T_{i}T_{p}g}(\lambda_{l})\\
	&=N\sum\limits_{p=1}^{N}\langle \widehat{f}, \widehat{T_{p}g} \rangle \widehat{g}(\lambda_{l})\widehat{\delta_{i}}(\lambda_{l})\widehat{\delta_{p}}(\lambda_{l})\\
	&=N^{\frac{3}{2}}\widehat{f}(\lambda_{l})|\widehat{g}(\lambda_{l})|^{2}\widehat{\delta_{i}}(\lambda_{l}).\numberthis
\end{align*} 
Thus from \eqref{st} and \eqref{ts}, we can conclude that $ST_{i}=T_{i}S$. Since $S$ is an invertible, we obtain $T_{i}S^{-1}=S^{-1}T_{i}.$ By applying Corollary \ref{dual} for the system $\{T_{i}g : 1\leq i \leq N\}$ and its canonical dual frame $\{T_{i}S^{-1}g : 1\leq i \leq N\}$, we obtain our required result.
\end{proof}

\section{Acknowlegdement}
 The author (R .V) thanks Indian Institute of Technology Madras, India, for the financial support. The author (R .R) thanks NBHM, DAE, India, for the research project grant.
	
	\vspace{10mm}
	
	
	
	
	
	\vspace{5mm}
	
	\bibliographystyle{amsplain}
	\bibliography{GSP}
\end{document}